\newtheorem{theorem}{Theorem}[section]
\newtheorem{lemma}{Lemma}[section]
\newtheorem{proposition}[theorem]{Proposition}
\newtheoremstyle{noparens}%
  {}{}%
  {\itshape}{}%
  {\bfseries}{.}%
  { }%
  {\thmname{#1}\thmnumber{ #2}\mdseries\thmnote{ #3}}
\theoremstyle{noparens}
\newtheorem{lemmaNoParens}[lemma]{Lemma}
\def\bn{\mathbf{n}}
\def\fvec#1{\boldsymbol{#1}} 
\def\bvec#1{\mathbf{#1}} 
\def\bmat#1{\mathbf{#1}} 
\def\NIT{\texttt{NIT}}
\def\TIME{\texttt{TIME}}
\def\NP{\texttt{NP}}
\def\Tsetup{$\text{T}_{\text{setup}}$}
\def\Tsolve{$\text{T}_{\text{solve}}$}
\def\Ttotal{$\text{T}_{\text{total}}$}
\begin{document}


\begin{frontmatter}




\title{Two-Level preconditioning method for solving saddle point systems in contact computation}




\author[a,b]{Xiaoyu Duan}
\ead{duanxiaoyu19@gscaep.ac.cn}
\author[a,c]{Hengbin An\corref{cor1}}
\ead{an\_hengbin@iapcm.ac.cn}

\affiliation[a]{organization={Institute of Applied Physics and Computational Mathematics},
            city={Beijing},
            postcode={100094}, 
            country={China}}
            
\affiliation[b]{organization={Graduate School of China Academy of Engineering Physics},
            city={Beijing},
            postcode={100088}, 
            country={China}}

\affiliation[c]{organization={CAEP Software Center for High Performance Numerical Simulation},
            city={Beijing},
            postcode={100088}, 
            country={China}}
            

\begin{abstract}
In contact mechanics computation, the constraint conditions on the contact surfaces are typically enforced by the Lagrange multiplier method, resulting in a saddle point system. Given that the saddle point matrix is indefinite, 
solving these systems presents significant challenges. 
For a two-dimensional tied contact problem, an efficient two-level preconditioning method is developed. 
This method utilizes physical quantities for coarsening, introducing two 
types of interpolation operators and corresponding smoothing algorithms. 
Additionally, the constructed coarse grid operator exhibits symmetry and 
positive definiteness, adequately reflecting the contact constraints. 
Numerical results show the effectiveness of the method.
\end{abstract}



\begin{keyword}
Contact mechanics \sep
Lagrange multiplier method \sep
Saddle point system \sep
Preconditioning \sep
AMG \sep
Two-Level method


\end{keyword}

\end{frontmatter}



\section{Introduction}
\label{sect:intrd}

Contact problems in structural mechanics are commonly encountered in various
engineering fields, such as automotive design and manufacturing, as well as aerospace 
engine design and optimization~\cite{an2022shear, INTERNODES2022}. The finite element 
method (FEM) has been widely employed in the numerical simulation of contact 
problems~\cite{NTShughes1976finite, wriggers1995finite}. Two key aspects must be 
addressed in the numerical computation of contact problems in structural mechanics: 
the discretization of the contact surface and the treatment of contact constraints. 
The discretization of the contact surface involves meshing the contact bodies and 
subsequently searching for and matching neighboring entities (nodes, edges, or faces) 
on both sides of the contact interface~\cite{popp2012mortar, STSpapadopoulos1992mixed}. 
The mortar method, introduced in 1994 as a domain decomposition 
technique~\cite{MORbernardi1994new}, is particularly effective in handling mismatched 
interface meshes and is regarded as one of the most advanced methods for contact 
mechanics computation~\cite{MORbelgacem1999mortar, MORtur2009mortar, popp2012mortar}.

Among the methods for imposing contact constraints, the most prominent are the penalty 
method~\cite{PENALTYzavarise2009modified} and the Lagrange multiplier 
method~\cite{LAGpapadopoulos1998lagrange}. The penalty method has the advantage of 
maintaining the size of the discrete linear system similar to that without contact. 
However, it requires the user to define a penalty factor, which can significantly impact 
the convergence and stability of the numerical solution. In structural mechanics 
simulations involving multiple materials and contact pairs, selecting an appropriate 
penalty factor is often challenging. An improper choice can result in a severely ill-
conditioned linear system, making it difficult to solve. Consequently, there is a 
pressing need for efficient solvers tailored for contact problems, particularly for 
saddle-point problems. The Lagrange multiplier method accurately enforces constraints, 
with Lagrange multipliers having a clear physical meaning, representing the contact 
forces on the surface. Thus, it provides a more natural way to impose contact 
constraints. However, it introduces additional degrees of freedom, resulting in a larger
linear system with a saddle-point structure that is symmetric and indefinite, posing 
significant challenges in solving the equations.

While direct methods apply to solving saddle point systems, they often become
impractical for large-scale problems due to significant computational costs and memory 
demands. In contrast, iterative methods with preconditioning
offer a more effective solution for large, sparse linear systems. To tackle saddle point
problems in contact mechanics, researchers have developed block preconditioners
~\cite{INTERNODES2022,liu2013new,franceschini2019block,franceschini2022reverse}, 
whose convergence relies on the properties of the sub-block matrices. Beyond block 
preconditioners, multigrid methods have been successfully employed across various 
fields, such as the Stokes equation~\cite{janka2008smoothed} and incompressible Navier-Stokes 
problems~\cite{griebel1998algebraic}. Adams was the first to apply the algebraic
multigrid (AMG) method to mortar finite element discretized contact 
problems~\cite{adams2004algebraic}, performing standard aggregation on the auxiliary 
matrix graph of the Lagrange multipliers. Building on this, Wiesner et al. proposed AMG preconditioned Krylov subspace methods for solving saddle point systems in contact 
mechanics~\cite{AMGwiesner2021algebraic}, introducing a novel Lagrange multiplier 
aggregation technique on the contact interface to ensure that the saddle point structure
of the matrix is maintained across all coarse grid levels. Although Wiesner's work 
improved the solution of contact problems by considering the $2 \times 2$ structure of
the global matrix, it did not further leverage the specific characteristics of the 
mortar matrix in these problems.

This paper investigates the solution of saddle point equations derived from the mortar
method discretization of contact problems. To simplify the analysis, a two-dimensional 
binding contact problem is utilized as a representative case to explore solution methods
for the corresponding linear systems. Taking into account application features and 
matrix structure properties, this study develops a two-level preconditioning method. This 
method utilizes physical quantities for coarsening and introduces two types of 
interpolation operators, along with corresponding smoothing algorithms, to achieve error
elimination. The coarse grid matrix constructed exhibits symmetry and positive 
definiteness, adequately representing the contact constraints. Furthermore, an 
approximation strategy is introduced to enhance the computational efficiency of the 
method. In comparison to commonly used preconditioning methods, the two-level method 
demonstrates advantages across various contact models, greatly facilitating the solution
process of saddle point equations.

\section{Tied contact problem and FEM discretization}
\label{sect:tie-contact}

In this section, the tied contact models, including the differential form and variational form, are introduced.
For tied contact problem, the contact bodies are tied together
by some shared surfaces.
Taking two bodies as an example, two elastic bodies are denoted as $\Omega^{(1)}$ and $\Omega^{(2)}$, where $\Omega^{(1)}$ is a salve body and $\Omega^{(2)}$ is a master body,
and $\Omega^{(i)} \subset \mathbb{R}^d$, $d = 2$ or 3.
In each subdomain $\Omega^{(i)}$, the linear elastic boundary value problem has the following form~\cite{popp2012mortar}
\begin{equation}
\label{strong}
	\begin{aligned}
		-\nabla \cdot \boldsymbol{\sigma}^ {(i)}  &= \fvec{f}^{(i)}  & \; \text{ in } \; \Omega^{(i)}, \\
		\fvec{u}^{(i)} 			  &= \hat{\fvec{u}}^ {(i)} & \;  \text{ on } \; \Gamma_{\mathrm{u}}^{(i)}, \\
		\boldsymbol{\sigma}^{(i)} \cdot \bn^ {(i)} &= \fvec{t}^{(i)} & \; \text{ on } \; \Gamma_{\sigma}^{(i)},
	\end{aligned}
\end{equation}
which consists of the equilibrium equation,
displacement boundary conditions, and stress boundary conditions.
Herein, $\fvec{u}^{(i)}$ is the displacement,
$\boldsymbol{\sigma}^{(i)}$ is the stress that is dependent on the displacement,
$\fvec{f}^{(i)}$ is the body force, $\bn^ {(i)}$ is the unit outer normal vector
of the domain $\Omega^{(i)}$.
$\Gamma_{\mathrm{u}}^{(i)}$ denotes the Dirichlet boundary, where the displacements
are prescribed by $\hat{\fvec{u}}^ {(i)}$, and $\Gamma_{\sigma}^{(i)}$ represents the Neumann boundary, where the tractions are given by $\fvec{t}^{(i)}$.
For the tied contact problem,
the displacement of each body on the contact surface is equal.
In other words, it can be stated as
\begin{equation}
	\fvec{u}^{(1)} = \fvec{u}^{(2)} \quad  \text{on} \quad \Gamma_{\mathrm{c}} ,
\end{equation}
where $\Gamma_{\mathrm{c}}$ represents the contact boundary of
$\Omega^{(1)}$ and $\Omega^{(2)}$.

To define the weak form of the tied contact problem, we use the function spaces 
$\boldsymbol{\mathcal{U}}^{(i)}$ and $\boldsymbol{\mathcal{V}}^{(i)}$ to represent the 
trial and test function spaces, respectively. Furthermore, the Lagrange multiplier is 
defined in the space $\boldsymbol{\mathcal{M}}$, which is the dual space of the trace 
space of $\boldsymbol{\mathcal{V}}^{(1)}$ on $\Gamma_{\mathrm{c}}$. Consequently, the 
following saddle-point weak form is derived: find $\fvec{u}^{(i)} \in \boldsymbol{\mathcal{U}}^{(i)}$ and 
$\boldsymbol{\lambda} \in \boldsymbol{\mathcal{M}}$ such that 
\begin{align}
			-\delta \mathcal{W}_{\mathrm{int}, \mathrm{ext}} \left( \fvec{u}^{(i)}, \delta \fvec{u}^{(i)} \right) + 
			\int_{\Gamma_{\mathrm{c}}} \boldsymbol{\lambda} \cdot\left(\delta \fvec{u}^{(1)} - \delta \fvec{u}^{(2)} \right) \mathrm{d} S = 0,
			& \quad \forall \; \delta \fvec{u}^{(i)} \in \boldsymbol{\mathcal{V}}^{(i)}, \label{eqn:weakform1} \\ 
			\int_{\Gamma_{\mathrm{c}}} \delta \boldsymbol{\lambda} \cdot \left( \fvec{u}^{(1)} - \fvec{u}^{(2)} \right) \mathrm{d} S = 0,
			& \quad \forall \; \delta \boldsymbol{\lambda} \in \boldsymbol{\mathcal{M}}. \label{eqn:weakform2}
\end{align}
Here, $\mathcal{W}{\mathrm{int},\mathrm{ext}}$ denotes the virtual work term arising 
from internal and external forces~\cite{wang2003}. The second term in 
~\eqref{eqn:weakform1} corresponds to the virtual work generated by contact forces, 
denoted by $\mathcal{W}{\mathrm{mt}}$, while ~\eqref{eqn:weakform2} defines the weak 
form associated with the tied contact constraint, denoted by $\mathcal{W}_{\lambda}$.

Using the weak form, the finite element method can be employed to discretize and obtain
a system of linear equations. In contact mechanics simulations, the mortar finite 
element method~\cite{popp2012mortar} is widely used to handle the coupling terms between
displacement and Lagrange multipliers at the contact interface ($\mathcal{W}
{\mathrm{mt}}$ and $\mathcal{W}{\lambda}$). At the same time, the conventional virtual 
work terms $\mathcal{W}_{\mathrm{int},\mathrm{ext}}$ are discretized using the classical finite element method~\cite{wang2003}.

For instance, in the tied contact problem between two bodies, Figure~\ref{mesh}~
illustrates the discretization under conditions of mesh mismatch. The contact surface 
$\Gamma_{\mathrm{c},h}^{(1)}$ in body $\Omega_{h}^{(1)}$ is designated as the slave 
surface, while the contact surface $\Gamma_{\mathrm{c},h}^{(2)}$ in body 
$\Omega_{h}^{(2)}$ is designated as the master surface.

\begin{figure}[htbp]
	\centering
	\includegraphics[scale=0.6]{./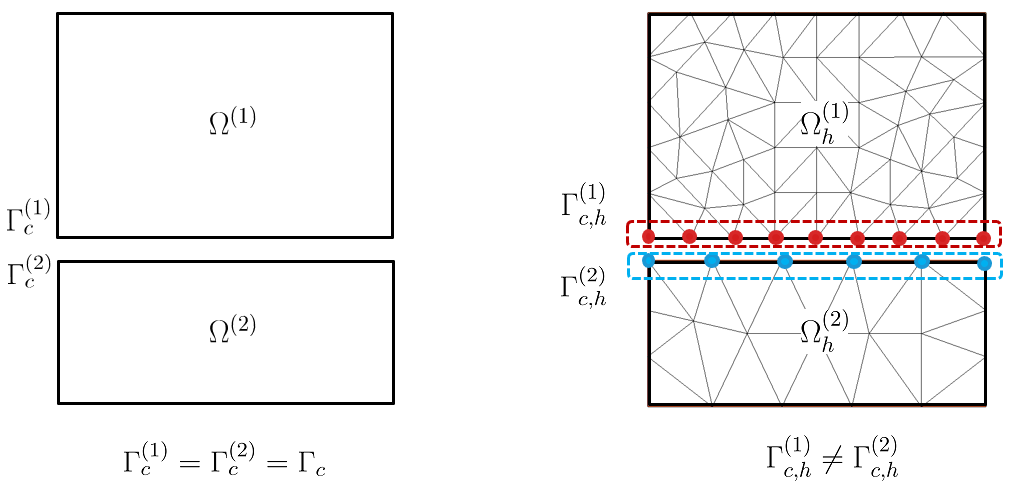}
	\caption{2D two-body tied contact model.}
	\label{mesh}
\end{figure}

The discrete form of the virtual work $\delta \mathcal{W}_{\mathrm{mt}}$ is expressed as
\begin{eqnarray}
\label{mt}
\begin{aligned}
-\delta \mathcal{W}_{\mathrm{mt}, h}
&=
\sum_{j=1}^{m^{(1)}} \sum_{k=1}^{n^{(1)}} \boldsymbol{\lambda}_{j}^{\top} \left(\int_{\Gamma_{\mathrm{c}, h}^{(1)}} \Phi_{j} N_{k}^{(1)} \mathrm{d} A_{0} \right) \delta \bvec{d}_{k}^{(1)} \\
&-\sum_{j=1}^{m^{(1)}} \sum_{l=1}^{n^{(2)}} \boldsymbol{\lambda}_{j}^{\top} \left( \int_{\Gamma_{\mathrm{c}, h}^{(1)}} \Phi_{j} \left( N_{l}^{(2)} \circ \chi_{h} \right) \mathrm{d} A_{0} \right) \delta \bvec{d}_{l}^{(2)},
\end{aligned}
\end{eqnarray}
where $\chi_{h}: \Gamma_{\mathrm{c},h}^{(1)} \rightarrow \Gamma_{\mathrm{c},h}^{(2)}$ is a discrete mapping from slave side to master side.
Note that $\Gamma_{\mathrm{c}}^{(1)} = \Gamma_{\mathrm{c}}^{(2)}$
in continuous case. However, in discrete case, $\Gamma_{\mathrm{c},h}^{(1)} \neq \Gamma_{\mathrm{c},h}^{(2)}$ because of non-matching mesh on contact surfaces.
Since the numerical integration is only performed on the slave side $\Gamma_{\mathrm{c},h}^{(1)}$,
the map $\chi_{h}$ has a crucial influence on the computation~\cite{popp2012mortar}.

Now define
a $m^{(1)} \times n^{(1)}$ block matrix $\bmat{D}$ and
a $m^{(1)} \times n^{(2)}$ block matrix $\bmat{M}$
by
\begin{eqnarray}
		\begin{aligned}
			&{\bmat{D}}[j, k] = D_{j k} {\bmat{I}}_{\textsf{ndim}},  &j=1, \ldots, m^{(1)}, \quad k= 1, \dots , n^{(1)}\\
			&{\bmat{M}}[j, l] = M_{j l} {\bmat{I}}_{\textsf{ndim}},  &j=1, \ldots, m^{(1)}, \quad l = 1, \dots , n^{(2)}.
		\end{aligned}
\end{eqnarray}
where
\begin{eqnarray*}
D_{j k} = \int_{\Gamma_{\mathrm{c}, h}^{(1)}} \Phi_{j} N_{k}^{(1)} \mathrm{d} S,
\quad
M_{j l} = \int_{\Gamma_{\mathrm{c}, h}^{(1)}} \Phi_{j} \left( N_{l}^{(2)} \circ \chi_{h} \right) \mathrm{d} S,
\end{eqnarray*}
and ${\bmat{I}}_{\textsf{ndim}}$ is the ${\textsf{ndim}} \times {\textsf{ndim}}$ identity matrix.
In general, both $\bmat{D}$ and $\bmat{M}$ are not square matrices.
However, $\bmat{D}$ becomes a square matrix for the common choice $m^{(1)} = n^{(1)}$.

Divide the grid nodes into two sets: one is the node set of the slave body, which includes the slave contact surface node set $\mathcal{S}$ and the internal node set $\mathcal{N}_1$; the other is the node set of the master body, which includes the master contact surface node set $\mathcal{M}$ and the internal node set $\mathcal{N}_2$.
Thus, all displacement degrees of freedom can be represented in a block form as
\begin{equation}
    \bvec{d} = [\bvec{d}_1, \bvec{d}_2], \quad
    \bvec{d}_1 = [\bvec{d}_{\mathcal{N}_1}, \bvec{d}_\mathcal{S}], \quad
    \bvec{d}_2 = [\bvec{d}_{\mathcal{N}_2}, \bvec{d}_\mathcal{M}].
\end{equation}
Correspondingly, by the definition of the matrices
$\bmat{D}$ and $\bmat{M}$,
equation~\eqref{mt} can be written in matrix form:
\begin{eqnarray}
\label{mat2}
-\delta \mathcal{W}_{\mathrm{mt}, h}
			&=& \delta \bvec{d}_\mathcal{S}^{\top} {\bmat{D}}^{\top} \boldsymbol{\lambda} - \delta \bvec{d}_\mathcal{M}^{\top} {\bmat{M}}^{\top} \boldsymbol{\lambda}
			= \delta \bvec{d}^{\top}
			\left[
			\begin{array}{c}
				\mathbf{0} \\ {\bmat{D}}^{\top} \\ \mathbf{0} \\ -{\bmat{M}}^{\top}
			\end{array}
			\right] \boldsymbol{\lambda}  \nonumber \\
			&=& \delta \bvec{d}_1^{\top} \tilde{{\bmat{D}}}^{\top} \boldsymbol{\lambda} - \delta \bvec{d}_2^{\top} \tilde{{\bmat{M}}}^{\top} \boldsymbol{\lambda}
			= \delta \bvec{d}^{\top}
			\left[
			\begin{array}{c}
				\tilde{{\bmat{D}}}^{\top} \\ -\tilde{{\bmat{M}}}^{\top}
			\end{array} \right] \boldsymbol{\lambda}
			= \delta \bvec{d}^{\top} {\bmat{G}}^{\top} \boldsymbol{\lambda} ,
\end{eqnarray}
where
\begin{equation}
   {\bmat{G}} = [\tilde{{\bmat{D}}}, -\tilde{{\bmat{M}}}], \quad
    \tilde{{\bmat{D}}} = [\mathbf{0}, {\bmat{D}}], \quad
    \tilde{{\bmat{M}}} = [\mathbf{0}, {\bmat{M}}]. 
\end{equation}
Furthermore, the discrete form of the weak formulation $\mathcal{W}_{\lambda} $ can be expressed as
\begin{eqnarray}
	\label{mat3}
		\begin{aligned}
			\delta \mathcal{W}_{\lambda, h}
			&= \delta \boldsymbol{\lambda}^{\top} {\bmat{D}} \bvec{d}_\mathcal{S} - \delta \boldsymbol{\lambda}^{\top} {\bmat{M}} \bvec{d}_\mathcal{M}
			= \delta \boldsymbol{\lambda}^{\top} \tilde{{\bmat{M}}}  \bvec{d}_1 - \delta \boldsymbol{\lambda}^{\top} \tilde{{\bmat{M}}} \mathrm{d}_2
			= \delta \boldsymbol{\lambda}^{\top} {\bmat{G}} \bvec{d}.
		\end{aligned}
\end{eqnarray}

In summary, the discrete system can be written as follows:
\begin{eqnarray}
	\label{eqn: twobody-disc}
	\begin{aligned}
		{\bmat{K}} \bvec{d} + {\bmat{G}}^{\top} \boldsymbol{\lambda} -\bvec{f}_{\mathrm{ext}} &= 0, \\
		{\bmat{G}} \bvec{d} &= 0,
	\end{aligned}
\end{eqnarray}
which can be expressed compactly by matrix-vector form
\begin{equation}
	\label{eqn: total}
	\left[ \begin{array}{c c }
		\bmat{K} & \bmat{G}^{\top} \\
		\bmat{G} & \mathbf{0}
	\end{array} \right]
	\left[ \begin{array}{c}
		\bvec{d} \\
		\boldsymbol{\lambda}
	\end{array} \right]
	=
	\left[ \begin{array}{c}
		\bvec{f} \\
		\mathbf{0}
	\end{array} \right],
\end{equation}
or succinctly expressed as 
\begin{eqnarray}
\label{eqn:lin-system-Ax-b}    
  \bmat{\mathcal{A}} \bvec{x} = \bvec{b}.
\end{eqnarray}

\section{Two-Level AMG preconditioning method for contact computation}
\label{sect:two-level-amg}
\def\Pideal{\hat{\mathcal{P}}}
\def\Rideal{\hat{\mathcal{R}}}

The AMG method is efficient for solving linear systems in elasticity mechanics due to the 
symmetric positive definiteness of the coefficient matrix. However, in contact mechanics, 
constraints imposed by Lagrange multipliers lead to a saddle point system. Classical AMG 
methods often show slow convergence or fail to converge when applied directly to these 
systems. This is due to the indefinite nature of the saddle point matrix, which couples 
displacement and Lagrange multiplier degrees of freedom representing contact constraints. 
Contact constraints are crucial in contact problems, however, classical AMG methods often 
struggle with the coupling between these different unknowns. Therefore, we propose a 
two-level method tailored specifically to solve saddle point systems in contact 
mechanics. This method aims to design a coarse grid operator that represents contact 
constraints and is easy to solve.

\subsection{Two-Level AMG framework}
\label{subsect:framework}
When computing contact problems, imposing constraints with the Lagrange multiplier method 
results in a saddle point linear system, as illustrated in~\eqref{eqn: total}, 
\begin{equation}
	\left[ \begin{array}{c c }
		\bmat{K} & \bmat{G}^{\top} \\
		\bmat{G} & \mathbf{0}
	\end{array} \right]
	\left[ \begin{array}{c}
		\bvec{d} \\
		\boldsymbol{\lambda}
	\end{array} \right]
	=
	\left[ \begin{array}{c}
		\bvec{f} \\
		\mathbf{0}
	\end{array} \right],
\end{equation}
here, the matrix $\bmat{G}$ represents the algebraic formulation of the contact 
constraints, capturing the coupling between the displacement and the Lagrange 
multipliers. When the displacement degrees of freedom (DOFs) are subdivided into 
$\bvec{d} = \left[ \bvec{d}_{\mathcal{N}},\bvec{d}_{\mathcal{M}},\bvec{d}_{\mathcal{S}} \right] ^{\top}$, 
where $\mathcal{M}$ represents the master surface nodes, $\mathcal{S}$ represents the 
slave surface nodes and $\mathcal{N}$ represents the other nodes, the linear system can 
be equivalently represented as follows: 
\begin{equation}
	\label{eqn:twobody1}
		\left[ \begin{array}{c c c c}
			\bmat{K}_{\mathcal{N} \mathcal{N}} & \bmat{K}_{\mathcal{N} \mathcal{M}} & \bmat{K}_{\mathcal{N} \mathcal{S}}  & \mathbf{0} \\
			\bmat{K}_{\mathcal{M} \mathcal{N}} & \bmat{K}_{\mathcal{M} \mathcal{M}} & \mathbf{0} 							 & -\bmat{M}^{\top} \\
			\bmat{K}_{\mathcal{S} \mathcal{N}} & \mathbf{0} 						  & \bmat{K}_{\mathcal{S} \mathcal{S}}  & \bmat{D}^{\top}  \\
			\mathbf{0} 							& -\bmat{M} 						  & \bmat{D} 							 & \mathbf{0}
		\end{array} \right]
		\left[ \begin{array}{c}
			\bvec{d}_{\mathcal{N}} \\ 
			\bvec{d}_{\mathcal{M}} \\ 
			\bvec{d}_{\mathcal{S}} \\ 
			\boldsymbol{\lambda}
		\end{array} \right]
		=
		\left[ \begin{array}{c}
			\bvec{f}_{\mathcal{N}} \\
			\bvec{f}_{\mathcal{M}} \\
			\bvec{f}_{\mathcal{S}}\\
			\mathbf{0}
		\end{array} \right].
\end{equation}
In designing the two-level method, we perform coarsening based on physical quantities. 
We select the displacement DOFs for the master surface nodes $\bvec{d}\mathcal{M}$, 
and for the other nodes $\bvec{d}\mathcal{N}$, as the coarse set, denoted $C$. The 
remaining unknowns, $\bvec{d}_\mathcal{S}$ and $\boldsymbol{\lambda}$, are treated as 
fine points, denoted $F$: 
\begin{align}\label{eqn:CF-split}
    C=\left\{ \bvec{d}_{\mathcal{N}}, \bvec{d}_{\mathcal{M}} \right\} , \quad
    F=\left\{ \bvec{d}_{\mathcal{S}}, \boldsymbol{\lambda} \right\}.
\end{align}
Rearranging based on coarse and fine points, equation~\eqref{eqn:twobody1} can be expressed as:
\begin{equation}
	\label{eqn:CF}
		\left[ \begin{array}{c c}
                \bmat{A}_{CC} & \bmat{A}_{CF} \\
                \bmat{A}_{FC} & \bmat{A}_{FF}
		\end{array} \right]
		\left[ \begin{array}{c}
    		  \bvec{x}_{C}  \\ 
			\bvec{x}_{F}
		\end{array} \right]
		=
		\left[ \begin{array}{c}
    		  \bvec{b}_{C}  \\ 
			\bvec{b}_{F}
		\end{array} \right],
\end{equation}
where the specific forms of the subblocks are as follows: 
\begin{align}
    \label{eqn:CF-block}
    \bmat{A}_{CC}
    &=
    \left[ \begin{array}{c c}
        \bmat{K}_{\mathcal{N} \mathcal{N}} & \bmat{K}_{\mathcal{N} \mathcal{M}} \\
        \bmat{K}_{\mathcal{M} \mathcal{N}} & \bmat{K}_{\mathcal{M} \mathcal{M}}
    \end{array} \right], \\
    \bmat{A}_{CF} 
    &=
    \left[ \begin{array}{c c}
        \bmat{K}_{\mathcal{N} \mathcal{S}}  & \mathbf{0} \\
        \mathbf{0} 							 & -\bmat{M}^{\top}
    \end{array} \right], \quad 
    \bmat{A}_{FC} = \bmat{A}_{CF}^\top, \\
    \bmat{A}_{FF} 
    &=
    \left[ \begin{array}{c c}
        \bmat{K}_{\mathcal{S} \mathcal{S}}  & \bmat{D}^{\top} \\
        \bmat{D} & \mathbf{0}
    \end{array} \right],
\end{align}
and the subblock $\bmat{A}_{FF}$ resulting from this CF splitting is invertible, 
\begin{align}
    \bmat{A}_{FF}^{-1} = \left[\begin{array}{cc}
        \mathbf{0}       & \bmat{D}^{-1} \\
        \bmat{D}^{-\top} & -\bmat{D}^{-\top} \bmat{K}_{\mathcal{S} \mathcal{S}} \bmat{D}^{-1} 
    \end{array}\right].
\end{align}

Furthermore, let $\bmat{I}_{CC}$ denote the identity matrix in the coarse grid space, 
with the following block form: 
\begin{align}
    \bmat{I}_{CC} = \left[\begin{array}{cc}
        \bmat{I}_{\mathcal{N} \mathcal{N}} & \mathbf{0} \\
        \mathbf{0} & \bmat{I}_{\mathcal{M} \mathcal{M}}
    \end{array}\right],
\end{align}
Let $\mathcal{P}: C \to \Omega$ denote the interpolation operator and 
$\mathcal{R}: \Omega \to C$ denote the restriction operator, with their block forms as follows
\begin{align}\label{eqn:interp-P-R}
    \mathcal{P} = \left[ \begin{array}{c}
        \bmat{I}_{CC} \\
        \bmat{P}_{FC} 
    \end{array}\right], \quad
    \mathcal{R} = \left[ \begin{array}{cc}
        \bmat{I}_{CC} & \bmat{R}_{CF} 
    \end{array}\right].    
\end{align}
Finally, let $\mathcal{A}_{H}$ denote the coarse grid operator.


Based on the definitions provided and the classical two-level method, the two-level 
algorithm for contact problems is outlined in Algorithm~\ref{algorithm:solve-contact}, 
which includes the setup and solve phases.
\begin{algorithm}
	\caption{Two-Level Algorithm Framework for Contact Problems} 
	\label{algorithm:solve-contact}
	\begin{algorithmic}[1]
        \State \textbf{Setup Phase}
        \State \quad Coarsening: Select $\bvec{d}_{\mathcal{M}}$ and $\bvec{d}_{\mathcal{N}}$ as coarse grid points.
        \State \quad Construct the interpolation operator $\mathcal{P}: C \rightarrow \Omega$ and restriction operator $\mathcal{R}: \Omega \rightarrow C$, as given in equation~\eqref{eqn:interp-P-R}.
        \State \quad Construct the coarse grid matrix $\mathcal{A}_{H}$.
        \State \textbf{Solve Phase}
        \State \quad Pre-smoothing: If applied, perform $\mu_1$ smoothing iterations on the fine grid system $\mathcal{A}\bvec{x} = \bvec{b}$ to obtain $\bvec{x}_h$; otherwise, set $\bvec{x}_h = \bvec{0}$.
        \State \quad Compute the residual and restrict to the coarse grid: $\bvec{f}_{h} = \bvec{b} - \mathcal{A}\bvec{x}_{h}$ and $\bvec{f}_{H} = \mathcal{R} \bvec{r}_{h}$.
        \State \quad Solve the coarse grid equation: $\mathcal{A}_{H} \bvec{e}_{H} = \bvec{f}_{H}$.
        \State \quad Interpolate and correct the fine grid approximation: $\bvec{e}_{h} = \mathcal{P}\bvec{e}_{H}$ and update $\bvec{x}_h = \bvec{x}_h + \bvec{e}_{h} $.
        \State \quad Post-smoothing: If applied, perform $\mu_2$ smoothing iterations on the fine grid system $\mathcal{A}\bvec{x} = \bvec{b}$ to update $\bvec{x}_h$; otherwise, keep $\bvec{x}_h$ unchanged.
	\end{algorithmic}
\end{algorithm}

A fundamental principle of the multigrid algorithm is the complementarity between the 
smoothing process and the coarse grid correction process. In geometric multigrid methods, 
the coarse grid correction operator is typically defined first, followed by the 
subsequent design of the smoother to reduce residual error components. Conversely, in 
algebraic multigrid methods, the smoother is often specified first, after which the 
interpolation and coarse grid operators are established based on the setup 
algorithm~\cite{brannick2018optimal}.

In the context of contact problems, the development of the two-level algebraic algorithm
components begins with defining the interpolation and coarse grid operators, followed by 
the design of the smoother. This ensures that the coarse grid correction and smoothing 
processes are effectively complementary, thus enabling error reduction.


\subsection{Interpolation}
\label{subsect:interpolation}

The block decomposition of the coefficient matrix $\mathcal{A}$ in~\eqref{eqn:CF} is as 
follows: 
\begin{align} \label{eqn:A-block-LDU}
    \left[ \begin{array}{c c}
        \bmat{A}_{CC} & \bmat{A}_{CF} \\
        \bmat{A}_{FC} & \bmat{A}_{FF}
    \end{array} \right] 
    =
    \left[ \begin{array}{c c}
        \bmat{I}   & \bmat{A}_{CF} \bmat{A}_{FF}^{-1} \\
        \mathbf{0} & \bmat{I}
    \end{array} \right] 
    \left[ \begin{array}{c c}
        \bmat{S}      & \mathbf{0} \\
        \mathbf{0}    & \bmat{A}_{FF}
    \end{array} \right] 
    \left[ \begin{array}{c c}
        \bmat{I}                            & \mathbf{0} \\
        \bmat{A}_{FF}^{-1} \bmat{A}_{FC}    & \bmat{I}
    \end{array} \right],     
\end{align}
where $\bmat{S} = \bmat{A}_{CC} - \bmat{A}_{CF} \bmat{A}_{FF}^{-1} \bmat{A}_{FC}$ 
represents the Schur complement of $\bmat{A}_{FF}$ within $\mathcal{A}$.

Based on the above decomposition, the ideal interpolation operator $\Pideal$ is defined 
as~\cite{brannick2010compatible, brandt2015bootstrap, wiesner2014multigrid, bui2020scalable, bui2021multigrid}: 
\begin{align}
    \Pideal = 
        \left[ \begin{array}{c}
             \bmat{I}_{CC} \\
             -\bmat{A}_{FF}^{-1} \bmat{A}_{FC} 
        \end{array}\right]
        =
	\left[ \begin{array}{c c}
            \bmat{I}_{\mathcal{N} \mathcal{N}} & \mathbf{0} \\
            \mathbf{0} & \bmat{I}_{\mathcal{M} \mathcal{M}} \\
            \mathbf{0} & \bmat{P} \\
            -\bmat{D}^{-\top} \bmat{K}_{\mathcal{S} \mathcal{N}} & -\bmat{D}^{-\top} \bmat{K}_{\mathcal{S} \mathcal{S}} \bmat{P}
	\end{array} \right], \label{eqn:ideal-P}
\end{align}
where $\bmat{P} = \bmat{D}^{-1} \bmat{M}$.
The restriction operator $\Rideal$ is defined as the transpose of the ideal interpolation operator $\Pideal$,
\begin{align}
    \Rideal = \Pideal^{\top} = 
        \left[ \begin{array}{c c}
            \bmat{I}_{CC}  & -\bmat{A}_{CF} \bmat{A}_{FF}^{-1} 
        \end{array} \right]
	=
	\left[ \begin{array}{c c c c}
	\bmat{I}_{\mathcal{N} \mathcal{N}} & \mathbf{0} 	&\mathbf{0} 	 & -\bmat{K}_{\mathcal{N}\mathcal{S}}\bmat{D}^{-1}	\\
	\mathbf{0} & \bmat{I}_{\mathcal{M} \mathcal{M}}  & \bmat{P}^\top      & -\bmat{P}^\top \bmat{K}_{\mathcal{S} \mathcal{S}}\bmat{D}^{-1}
	\end{array} \right],  \label{eqn:ideal-R}
\end{align}
accordingly, $\Rideal$ is termed the ideal restriction operator. It is important to note 
that constructing the ideal interpolation operator and the ideal restriction operator 
requires the computation of $\bmat{A}_{FF}^{-1}$. However, in the context of contact 
problems, this step only requires the computation of $\bmat{D}^{-1}$, as discussed in 
Section~\ref{subsect:design}.

Furthermore, the coarse grid operator (Galerkin type) is defined as 
$\hat{\mathcal{A}}_{H} = \hat{\mathcal{R}} \mathcal{A} \hat{\mathcal{P}} =  \bmat{A}_{CC} - \bmat{A}_{CF} \bmat{A}_{FF}^{-1} \bmat{A}_{FC} = \bmat{S}$, 
which represents the Schur complement matrix of the original saddle point system. The 
block form is as follows:
\begin{align} \label{eqn:ideal-coarse}
        \hat{\mathcal{A}}_{H}
        = \hat{\mathcal{R}} \bmat{\mathcal{A}} \hat{\mathcal{P}} = 
	\left[ \begin{array}{c c}
		\bmat{K}_{\mathcal{N} \mathcal{N}} & \bmat{K}_{\mathcal{N} \mathcal{M}} + \bmat{K}_{\mathcal{N} \mathcal{S}} \bmat{P} \\
		\bmat{K}_{\mathcal{M} \mathcal{N}} + \bmat{P}^{\top} \bmat{K}_{\mathcal{S} \mathcal{N}} & \bmat{K}_{\mathcal{M} \mathcal{M}} + \bmat{P}^{\top} \bmat{K}_{\mathcal{S} \mathcal{S}} \bmat{P}
	\end{array} \right].
\end{align}
Given the specific forms of the interpolation operator, restriction operator, and coarse 
grid operator established, a two-level iterative algorithm can be developed, as 
illustrated in algorithm~\ref{algorithm:solve-strategy-one}.
\begin{algorithm}
	\caption{Two-Level Iterative Algorithm}
	\label{algorithm:solve-strategy-one}
	\hspace*{0.02in} {\bf Input:} Matrix $\mathcal{A}$, right-hand side vector $\bvec{b}$, initial solution $\bvec{x}^{(0)}$, smoother $\bmat{\mathcal{B}}$\\
	\hspace*{0.02in} {\bf Output:}  Iterative solution $\bvec{x}^{(k+1)}$
	\begin{algorithmic}[1]
        \State For \, $k = 0, 1, 2, \cdots$ until convergence
        \State \quad Pre-smoothing: $\bvec{x}_{s} = \bvec{x}^{(k)} + \bmat{\mathcal{B}}^{-1} \left(\bvec{b} - \mathcal{A} \bvec{x}^{(k)} \right) $
	\State \quad Compute residual: $\bvec{f} = \bvec{b} - \mathcal{A}\bvec{x}_{s}$
	\State \quad Restrict:  $\bvec{f}_{H} = \hat{\mathcal{R}} \bvec{f}$
	\State \quad Solve the coarse grid system: $\hat{\mathcal{A}}_{H} \bvec{e}_{H} = \bvec{f}_{H}$, 
                     where $\hat{\mathcal{A}}_{H} = \hat{\mathcal{R}} \mathcal{A} \hat{\mathcal{P}}$
        \State \quad Interpolate: $\bvec{e} = \hat{\mathcal{P}} \bvec{e}_{H}$
        \State \quad Correct: $\bvec{x}^{(k+1)} = \bvec{x}_{s} + \bvec{e}$
        \State EndFor
	\end{algorithmic}
\end{algorithm}

Next, we analyze the form of the smoother that corresponds to the ideal interpolation 
operator. We begin by presenting Proposition~\ref{prop:strategy-one}.

\begin{proposition}\label{prop:strategy-one}
    Using algorithm~\ref{algorithm:solve-strategy-one} to solve the linear system 
    $\mathcal{A} \bvec{x} = \bvec{b}$, with interpolation defined by 
    $\bvec{e} = \hat{\mathcal{P}} \bvec{e}_{H}$, and for any initial value 
    $\bvec{x}^{(0)}$ and smoother $\mathcal{B}$, let the smoothed vector be 
    $\bvec{x}_{s} = [ \bvec{x}_{s,C}, \bvec{x}_{s,F} ]^{\top}$. If the coarse grid system 
    $\hat{\mathcal{A}}_{H}\bvec{e}_{H} = \bvec{f}_{H}$ is solved exactly, the residual 
    after one iteration is
    \begin{align} \label{eqn:residual}
        \bvec{r} = \bvec{b} - \mathcal{A} \bvec{x}^{(1)} =  
        \left[ \begin{array}{c}
            \bmat{A}_{CF} \bmat{A}_{FF}^{-1} \bvec{b}_{F} - \bmat{A}_{CF} \bvec{x}_{s,F} - \bmat{A}_{CF} \bmat{A}_{FF}^{-1} \bmat{A}_{FC} \bvec{x}_{s,C}\\
            \bvec{b}_{F} - \bmat{A}_{FF} \bvec{x}_{s,F} - \bmat{A}_{FC} \bvec{x}_{s,C}  
        \end{array}\right].
    \end{align}
\end{proposition}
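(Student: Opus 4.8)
The plan is to trace Algorithm~\ref{algorithm:solve-strategy-one} through a single sweep and exploit the two facts that make the ideal operators special: $\mathcal{A}\Pideal$ has vanishing fine block, and the Galerkin coarse operator equals the Schur complement, $\hat{\mathcal{A}}_{H}=\bmat{S}$ as in~\eqref{eqn:ideal-coarse}. First I would write the correction step explicitly. Since the interpolation is $\bvec{e}=\Pideal\bvec{e}_{H}$ and the coarse system is solved exactly, steps 3--7 collapse to
\begin{equation*}
  \bvec{x}^{(1)}=\bvec{x}_{s}+\Pideal\,\hat{\mathcal{A}}_{H}^{-1}\Rideal\,\bvec{f},\qquad \bvec{f}=\bvec{b}-\mathcal{A}\bvec{x}_{s},
\end{equation*}
so that $\bvec{r}=\bvec{b}-\mathcal{A}\bvec{x}^{(1)}=\bigl(\bmat{I}-\mathcal{A}\Pideal\,\hat{\mathcal{A}}_{H}^{-1}\Rideal\bigr)\bvec{f}$. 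I would also record the blocks $\bvec{f}_{C}=\bvec{b}_{C}-\bmat{A}_{CC}\bvec{x}_{s,C}-\bmat{A}_{CF}\bvec{x}_{s,F}$ and $\bvec{f}_{F}=\bvec{b}_{F}-\bmat{A}_{FC}\bvec{x}_{s,C}-\bmat{A}_{FF}\bvec{x}_{s,F}$.

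The one computation carrying content is the block product $\mathcal{A}\Pideal$: with $\Pideal$ having coarse block $\bmat{I}_{CC}$ and fine block $-\bmat{A}_{FF}^{-1}\bmat{A}_{FC}$, the coarse block of $\mathcal{A}\Pideal$ is $\bmat{A}_{CC}-\bmat{A}_{CF}\bmat{A}_{FF}^{-1}\bmat{A}_{FC}=\bmat{S}$ and its fine block is $\bmat{A}_{FC}-\bmat{A}_{FF}\bmat{A}_{FF}^{-1}\bmat{A}_{FC}=\mathbf{0}$. Hence $\mathcal{A}\Pideal\,\hat{\mathcal{A}}_{H}^{-1}$ has coarse block $\bmat{S}\bmat{S}^{-1}=\bmat{I}_{CC}$ and zero fine block — this is exactly where $\hat{\mathcal{A}}_{H}=\bmat{S}$ is used. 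Since $\Rideal=\Pideal^{\top}$ gives $\Rideal\bvec{f}=\bvec{f}_{C}-\bmat{A}_{CF}\bmat{A}_{FF}^{-1}\bvec{f}_{F}$, the term $\mathcal{A}\Pideal\,\hat{\mathcal{A}}_{H}^{-1}\Rideal\,\bvec{f}$ has coarse block $\bvec{f}_{C}-\bmat{A}_{CF}\bmat{A}_{FF}^{-1}\bvec{f}_{F}$ and fine block $\mathbf{0}$, and subtracting from $\bvec{f}$ yields
\begin{equation*}
  \bvec{r}=\begin{bmatrix}\bmat{A}_{CF}\bmat{A}_{FF}^{-1}\bvec{f}_{F}\\ \bvec{f}_{F}\end{bmatrix}.
\end{equation*}
The final step is to substitute $\bvec{f}_{F}=\bvec{b}_{F}-\bmat{A}_{FC}\bvec{x}_{s,C}-\bmat{A}_{FF}\bvec{x}_{s,F}$ into both rows and expand $\bmat{A}_{CF}\bmat{A}_{FF}^{-1}\bvec{f}_{F}$ in the first row, which reproduces~\eqref{eqn:residual} term by term.

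I do not expect a genuine obstacle: the argument is block bookkeeping, and the only step with real weight is that $\mathcal{A}\Pideal$ has zero fine block while $\hat{\mathcal{A}}_{H}=\bmat{S}$, so that $\bmat{S}^{-1}$ cancels and the coarse-grid correction removes exactly $\bvec{f}_{C}-\bmat{A}_{CF}\bmat{A}_{FF}^{-1}\bvec{f}_{F}$ from the coarse part of the residual while leaving the fine part $\bvec{f}_{F}$ untouched — this is precisely the effect of choosing the \emph{ideal} interpolation/restriction together with the Galerkin coarse operator. It is also worth noting that the derivation never uses invertibility of $\mathcal{A}$ nor any property of the smoother $\mathcal{B}$ beyond the smoothed iterate $\bvec{x}_{s}$ it returns, consistent with the ``for any initial value $\bvec{x}^{(0)}$ and smoother $\mathcal{B}$'' in the statement.
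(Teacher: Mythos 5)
Your proposal is correct and follows essentially the same route as the paper: both trace one sweep of Algorithm~\ref{algorithm:solve-strategy-one}, observe that the fine block of $\mathcal{A}\hat{\mathcal{P}}$ vanishes while its coarse block $\bmat{S}$ cancels against $\hat{\mathcal{A}}_{H}^{-1}=\bmat{S}^{-1}$, and conclude that the correction removes exactly $\hat{\mathcal{R}}\bvec{f}$ from the coarse part of the residual, leaving $\bvec{f}_{F}$ in the fine part. Your operator-level packaging of the same block computation (and the remark that only invertibility of $\bmat{A}_{FF}$ and $\bmat{S}$ is used) is a cosmetic difference, not a different argument.
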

\begin{proof}
    Define the smoothed residual vector as $\bvec{f} = \bvec{b} - \mathcal{A}\bvec{x}_{s}$, 
    where its block representation is given by
    \begin{align*}
        \bvec{f} = 
        \left[ \begin{array}{c}
            \bvec{f}_{C} \\
            \bvec{f}_{F} 
        \end{array}\right]
        = 
        \left[ \begin{array}{c}
            \bvec{b}_{C} \\
            \bvec{b}_{F} 
        \end{array}\right] -
        \left[ \begin{array}{c c}
            \bmat{A}_{CC}  & \bmat{A}_{CF} \\
            \bmat{A}_{FC}  & \bmat{A}_{FF}
        \end{array} \right]
        \left[ \begin{array}{c}
            \bvec{x}_{s,C} \\
            \bvec{x}_{s,F} 
        \end{array}\right] 
        =
        \left[ \begin{array}{c}
            \bvec{b}_{C} - \bmat{A}_{CC}\bvec{x}_{s,C} - \bmat{A}_{CF}\bvec{x}_{s,F}\\
            \bvec{b}_{F} - \bmat{A}_{FC}\bvec{x}_{s,C} - \bmat{A}_{FF}\bvec{x}_{s,F}
        \end{array}\right]  ,
    \end{align*}
    Restricting the residual $\bvec{f}$ to the coarse grid level results in 
    $
    \bvec{f}_{H} = \hat{\mathcal{R}} \bvec{f} = \bvec{f}_{C} - \bmat{A}_{CF} \bmat{A}_{FF}^{-1} \bvec{f}_{F} .
    $
    If the coarse grid linear system is solved exactly, then $\bvec{e}_{H} = \mathcal{A}_{H}^{-1} \bvec{f}_{H}$. 
    Subsequent interpolation yields 
    \begin{equation*}
            \bvec{e} = \mathcal{P} \bvec{e}_{H}
            = 
            \left[ \begin{array}{c}
                 \bmat{I}_{CC} \\
                 -\bmat{A}_{FF}^{-1} \bmat{A}_{FC} 
            \end{array}\right]
            \bvec{e}_{H}
            =
            \left[ \begin{array}{c}
                 \bvec{e}_{H} \\
                 -\bmat{A}_{FF}^{-1} \bmat{A}_{FC} \bvec{e}_{H}
            \end{array}\right]. 
    \end{equation*}
    The solution after one iteration is then given by $$\bvec{x}^{(1)} = \bvec{x}_{s} + \bvec{e},$$
    and the corresponding residual is
    \begin{align}\label{eqn:residual-strategy-one}
        \bvec{r} = \bvec{b} - \mathcal{A} \bvec{x}^{(1)} 
        = \bvec{b} - \mathcal{A} (\bvec{x}_{s} + \bvec{e})
        = \bvec{f} - \mathcal{A} \bvec{e},
    \end{align}
    where
    \begin{align*}
        \mathcal{A}\bvec{e} 
        =
            \left[ \begin{array}{c c}
                \bmat{A}_{CC}  & \bmat{A}_{CF} \\
                \bmat{A}_{FC}  & \bmat{A}_{FF}
            \end{array} \right]
            \left[ \begin{array}{c}
                 \bvec{e}_{H} \\
                 -\bmat{A}_{FF}^{-1} \bmat{A}_{FC} \bvec{e}_{H}
            \end{array}\right]  
        =
            \left[ \begin{array}{c}
                 \bmat{A}_{CC} \bvec{e}_{H} - \bmat{A}_{CF} \bmat{A}_{FF}^{-1} \bmat{A}_{FC} \bvec{e}_{H} \\
                 \bmat{A}_{FC} \bvec{e}_{H} - \bmat{A}_{FC} \bvec{e}_{H}
            \end{array}\right]  
        =
            \left[ \begin{array}{c}
                 \bmat{S} \bvec{e}_{H} \\
                 \bvec{0}
            \end{array}\right] .
    \end{align*}
    Given that $\bvec{e}_{H} = \hat{\mathcal{A}}_{H}^{-1} \bvec{f}_{H} = \bmat{S}^{-1} \bvec{f}_{H}$, it follows that
    \begin{align*}
        \mathcal{A}\bvec{e} 
        =
            \left[ \begin{array}{c}
                 \bvec{f}_{H} \\
                 \bvec{0}
            \end{array}\right] .
    \end{align*}
    Furthermore,
    \begin{align*}
        \bvec{f}_{H} = \bvec{f}_{C} - \bmat{A}_{CF} \bmat{A}_{FF}^{-1} \bvec{f}_{F} 
        = \bvec{b}_{C} - \bmat{A}_{CF} \bmat{A}_{FF}^{-1} \bvec{b}_{F} - \bmat{S} \bvec{x}_{s,C},
    \end{align*}
    Substituting this block representation into equation~\eqref{eqn:residual-strategy-one} results in 
    \begin{align}
        \bvec{r} = \bvec{f} - \mathcal{A} \bvec{e}
        &=\left[ \begin{array}{c}
            \bvec{f}_{C} \\
            \bvec{f}_{F} 
        \end{array}\right] 
        -
        \left[ \begin{array}{c}
             \bvec{f}_{H} \\
             \bvec{0}
        \end{array}\right]   \\
        &=
        \left[ \begin{array}{c}
            \bmat{A}_{CF} \bmat{A}_{FF}^{-1} \bvec{b}_{F} - \bmat{A}_{CF} \bvec{x}_{s,F} - \bmat{A}_{CF} \bmat{A}_{FF}^{-1} \bmat{A}_{FC} \bvec{x}_{s,C}\\
            \bvec{b}_{F} - \bmat{A}_{FF} \bvec{x}_{s,F} - \bmat{A}_{FC} \bvec{x}_{s,C}  
        \end{array}\right] .
    \end{align}
\end{proof}

Proposition~\ref{prop:strategy-one} shows that the residual vector after one iteration
can be expressed in terms of the smoothed vector. Our objective in designing the smoother
is to minimize the norm of the residual vector $\bvec{r}$. Accordingly, we propose two 
smoothers that align with the ideal interpolation operator.

\subsection{Smoother} 
\label{subsect:smoother}

\subsubsection{F-relaxation}

The first smoothing method is called F-relaxation, where only the F-points are smoothed. To analyze the effect of smoothing on the residual, we first estimate the 1-norm of the residual vector after one iteration of the two-level method, as given in Equation~\eqref{eqn:residual}:
\begin{align*}
    \| \bvec{r} \| =& \| \bmat{A}_{CF} \bmat{A}_{FF}^{-1} \bvec{b}_{F} - \bmat{A}_{CF} \bvec{x}_{s,F} - \bmat{A}_{CF} \bmat{A}_{FF}^{-1} \bmat{A}_{FC} \bvec{x}_{s,C} \|_{1} + \| \bvec{b}_{F} - \bmat{A}_{FF} \bvec{x}_{s,F} - \bmat{A}_{FC} \bvec{x}_{s,C}  \|_{1} \\ 
    \leq&  \| \bmat{A}_{CF} \bmat{A}_{FF}^{-1} \bvec{b}_{F} - \bmat{A}_{CF} \bvec{x}_{s,F}  \|_{1} + \| \bmat{A}_{CF} \bmat{A}_{FF}^{-1} \bmat{A}_{FC} \bvec{x}_{s,C} \|_{1}  \\ 
     & + \| \bvec{b}_{F} - \bmat{A}_{FF} \bvec{x}_{s,F} \|_{1} + \| \bmat{A}_{FC} \bvec{x}_{s,C} \|_{1} \\
    =& \| \bmat{A}_{CF} \bmat{A}_{FF}^{-1} ( \bvec{b}_{F} - \bmat{A}_{FF} \bvec{x}_{s,F} ) \|_{1} + \| \bvec{b}_{F} - \bmat{A}_{FF} \bvec{x}_{s,F} \|_{1}  \\
     & + \| \bmat{A}_{CF} \bmat{A}_{FF}^{-1} \bmat{A}_{FC} \bvec{x}_{s,C} \|_{1} +  \| \bmat{A}_{FC} \bvec{x}_{s,C} \|_{1}, 
\end{align*}
where $\bvec{x}_{s,C}$ and $\bvec{x}_{s,F}$ represent the components of the solution vector corresponding to the coarse points and fine points after pre-smoothing, respectively.

Given a convergence threshold $\varepsilon$, if only the fine points are smoothed, i.e., $\bvec{x}_{s,C} = 0$, then when $\| \bvec{b}_{F} - \bmat{A}_{FF} \bvec{x}_{s,F} \| < \varepsilon$, the residual of the original system satisfies
\begin{align*}
    \| \bvec{r} \| \leq \| 
    \bmat{A}_{CF} \bmat{A}_{FF}^{-1} \| \varepsilon + \varepsilon = C \varepsilon, \quad C = 1 + \| \bmat{A}_{CF} \bmat{A}_{FF}^{-1} \|.
\end{align*}
We can state the following proposition.

\begin{proposition}\label{prop:F-relaxtion-residual}
    With the restriction operator $\hat{\mathcal{R}}$ and interpolation operator $\hat{\mathcal{P}}$ defined by Equations~\eqref{eqn:ideal-R} and \eqref{eqn:ideal-P}, and the coarse grid operator $\bmat{\mathcal{A}}_{H} = \hat{\mathcal{R}} \mathcal{A} \hat{\mathcal{P}}$ defined by Equation~\eqref{eqn:ideal-coarse}, using Algorithm~\ref{algorithm:solve-strategy-one} to solve the linear system $\mathcal{A} \bvec{x} = \bvec{b}$, where the coarse grid system $\bmat{\mathcal{A}_{H}} \bvec{e}_{H} = \bvec{f}_{H}$ is solved exactly. If only the fine points are smoothed, and $\| \bvec{b}_{F} - \bmat{A}_{FF} \bvec{x}_{s,F} \| < \varepsilon$, then after one iteration of the two-level algorithm~\ref{algorithm:solve-strategy-one}, the residual of the original system satisfies
    \begin{align}
         \| \bvec{r} \| \leq  C \varepsilon, \quad C = 1 + \| \bmat{A}_{CF} \bmat{A}_{FF}^{-1} \|.
    \end{align}
\end{proposition}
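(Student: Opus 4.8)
The plan is to read off the one--iteration residual from Proposition~\ref{prop:strategy-one} and then specialize it to the F--relaxation setting, where by construction the coarse components are left untouched, i.e. $\bvec{x}_{s,C} = \bvec{0}$. Substituting $\bvec{x}_{s,C} = \bvec{0}$ into~\eqref{eqn:residual} kills the two terms involving $\bmat{A}_{FC}\bvec{x}_{s,C}$ and collapses the residual to
\[
  \bvec{r} = \left[\begin{array}{c}
      \bmat{A}_{CF}\bmat{A}_{FF}^{-1}\bigl(\bvec{b}_{F} - \bmat{A}_{FF}\bvec{x}_{s,F}\bigr) \\
      \bvec{b}_{F} - \bmat{A}_{FF}\bvec{x}_{s,F}
  \end{array}\right].
\]
Hence the whole residual is governed by the single quantity $\bvec{b}_{F} - \bmat{A}_{FF}\bvec{x}_{s,F}$, which is exactly the F--block residual that the F--relaxation smoother is designed to drive below $\varepsilon$.

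Next I would take the $1$--norm. Since the $1$--norm of a stacked vector is the sum of the $1$--norms of its blocks, $\| \bvec{r} \|_{1} = \| \bmat{A}_{CF}\bmat{A}_{FF}^{-1}(\bvec{b}_{F} - \bmat{A}_{FF}\bvec{x}_{s,F}) \|_{1} + \| \bvec{b}_{F} - \bmat{A}_{FF}\bvec{x}_{s,F} \|_{1}$. Applying submultiplicativity of the induced $1$--norm to the first summand and then inserting the hypothesis $\| \bvec{b}_{F} - \bmat{A}_{FF}\bvec{x}_{s,F} \| < \varepsilon$ yields $\| \bvec{r} \| \le \| \bmat{A}_{CF}\bmat{A}_{FF}^{-1} \|\,\varepsilon + \varepsilon = \bigl(1 + \| \bmat{A}_{CF}\bmat{A}_{FF}^{-1} \|\bigr)\varepsilon$, which is the claimed estimate with $C = 1 + \| \bmat{A}_{CF}\bmat{A}_{FF}^{-1} \|$. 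This is precisely the chain of inequalities displayed just before the proposition, now made rigorous under the explicit F--relaxation assumption $\bvec{x}_{s,C}=\bvec{0}$.

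There is no serious obstacle here: the argument is a triangle inequality followed by submultiplicativity once the residual formula of Proposition~\ref{prop:strategy-one} is available. The only points requiring care are bookkeeping ones --- checking that the norm appearing on $\bmat{A}_{CF}\bmat{A}_{FF}^{-1}$ is the operator norm induced by the chosen vector $1$--norm, so submultiplicativity applies, and recording that the result is a post--correction bound that already presupposes both an exact coarse solve (inherited from Proposition~\ref{prop:strategy-one}) and that the smoother has actually achieved $\| \bvec{b}_{F} - \bmat{A}_{FF}\bvec{x}_{s,F} \| < \varepsilon$; the latter is a property of the F--smoother, not something to be established in this proof. One may optionally remark that $\bmat{A}_{FF}^{-1}$, and hence the constant $\| \bmat{A}_{CF}\bmat{A}_{FF}^{-1} \|$, is available in closed form from the block inverse given earlier, so $C$ depends only on $\bmat{D}$, $\bmat{M}$, $\bmat{K}_{\mathcal{N}\mathcal{S}}$ and $\bmat{K}_{\mathcal{S}\mathcal{S}}$.
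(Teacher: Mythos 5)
Your proposal is correct and follows essentially the same route as the paper: the paper's own justification is the norm estimate displayed immediately before the proposition, which likewise starts from the residual formula of Proposition~\ref{prop:strategy-one}, sets $\bvec{x}_{s,C}=\bvec{0}$, and combines the block $1$-norm decomposition with submultiplicativity to obtain $C = 1 + \| \bmat{A}_{CF}\bmat{A}_{FF}^{-1}\|$. Your version is marginally tidier in that you substitute $\bvec{x}_{s,C}=\bvec{0}$ before estimating, so the residual collapses exactly to the two blocks driven by $\bvec{b}_F - \bmat{A}_{FF}\bvec{x}_{s,F}$ rather than being bounded by a four-term triangle inequality, but the substance is identical.
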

Clearly, if the fine points are smoothed exactly, i.e., $\| \bvec{b}_{F} - \bmat{A}_{FF} \bvec{x}_{s,F} \| = 0$, then $\| \bvec{r} \| = 0$, and the two-level method becomes a direct method.

Introducing the matrix $\bmat{\mathcal{Q}}$
\begin{align}\label{eqn:Q}
	\bmat{\mathcal{Q}}
	=
        \left[ \begin{array}{c}
             \bmat{0} \\
             \bmat{I}_{FF} 
        \end{array}\right]
        =
	\left[ \begin{array}{c c}
            \mathbf{0} & \mathbf{0} \\
            \mathbf{0} & \mathbf{0} \\
            \bmat{I}_{\mathcal{S} \mathcal{S}} & \mathbf{0} \\
            \mathbf{0} & \bmat{I}_{\lambda \lambda}
	\end{array} \right], 
\end{align}
the exact F-point smoother $\mathcal{B}_{F}$ can be defined as
\begin{align} \label{eqn:F-relaxtion}
        \mathcal{B}_{F}^{-1} 
        = \mathcal{Q} (\mathcal{Q}^\top \mathcal{A} \mathcal{Q})^{-1} \mathcal{Q}^\top
        =
        \left[ \begin{array}{c c}
            \bmat{0}  & \bmat{0} \\
            \bmat{0}  & \bmat{A}_{FF}^{-1}            
        \end{array} \right] 
        =
		\left[ \begin{array}{c c c c}
			\mathbf{0} 	 & \mathbf{0}  & \mathbf{0}  & \mathbf{0} \\
			\mathbf{0} 	 & \mathbf{0}  & \mathbf{0}  & \mathbf{0} \\
			\mathbf{0} 	 & \mathbf{0}  & \mathbf{0}  & \bmat{D}^{-1} \\
			\mathbf{0} 	 & \mathbf{0}  & \bmat{D}^{-\top}  & -\bmat{D}^{-\top} \bmat{K}_{\mathcal{S} \mathcal{S}} \bmat{D}^{-1}
		\end{array} \right].
\end{align}

\begin{lemma} \label{lemma:add-equal-multi}
    Let the interpolation operator $\mathcal{P}$ and restriction operator $\mathcal{R}$ be defined as 
    \begin{align*}
    \mathcal{P} = \left[ \begin{array}{c}
        \bmat{I}_{CC} \\
        \bmat{P}_{FC} 
    \end{array}\right],
    \quad
    \mathcal{R} = \left[ \begin{array}{cc}
        \bmat{I}_{CC} & \bmat{R}_{CF} 
    \end{array}\right],
    \end{align*}
    and the smoother is $\mathcal{B}^{-1}_{F}$ defined by Equation~\eqref{eqn:F-relaxtion}. Define the two-level preconditioner as
    \begin{align}
        \mathcal{M} 
        =
        \left[ \begin{array}{c c}
            \bmat{I}   & -\bmat{R}_{CF} \\
            \mathbf{0} & \bmat{I}
        \end{array} \right] 
        \left[ \begin{array}{c c}
            \bmat{A}_{H}      & \mathbf{0} \\
            \mathbf{0}    & \bmat{A}_{FF}
        \end{array} \right] 
        \left[ \begin{array}{c c}
            \bmat{I}              & \mathbf{0} \\
            -\bmat{P}_{FC}     & \bmat{I}
        \end{array} \right],    
    \end{align}
    where $\bmat{A}_{H}$ is the coarse grid operator. Then,
    \begin{align} \label{eqn:add}
        \mathcal{I} - \mathcal{M}^{-1} \mathcal{A} = \mathcal{I} - \mathcal{P} \bmat{A}_{H}^{-1} \mathcal{R} \mathcal{A} -
        \mathcal{B}^{-1}_{F} \mathcal{A} .
    \end{align}
    If $\mathcal{R} \mathcal{A} \mathcal{Q} = 0 $, then
    \begin{align}
        \mathcal{I} - \mathcal{M}^{-1} \mathcal{A} = \left( \mathcal{I} - \mathcal{P} \bmat{A}_{H}^{-1} \mathcal{R}\mathcal{A} \right) \left(\mathcal{I} - \mathcal{B}^{-1}_{F} \mathcal{A} \right).  
    \end{align}
    If $\mathcal{Q}^{\top} \mathcal{A} \mathcal{P} = 0 $, then
    \begin{align}
        \mathcal{I} - \mathcal{M}^{-1} \mathcal{A} = \left(\mathcal{I} - \mathcal{B}^{-1}_{F} \mathcal{A} \right) \left( \mathcal{I} - \mathcal{P} \bmat{A}_{H}^{-1} \mathcal{R}\mathcal{A} \right). 
    \end{align}
\end{lemma}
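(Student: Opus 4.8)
The plan is to compute $\mathcal{M}^{-1}$ in closed form from the block-triangular factorization defining $\mathcal{M}$, observe that it splits as the sum of the coarse-grid correction operator $\mathcal{P}\bmat{A}_{H}^{-1}\mathcal{R}$ and the smoother $\mathcal{B}_F^{-1}$, and then convert the resulting additive identity~\eqref{eqn:add} into the two multiplicative forms by invoking the projection representation $\mathcal{B}_F^{-1} = \mathcal{Q}(\mathcal{Q}^\top\mathcal{A}\mathcal{Q})^{-1}\mathcal{Q}^\top$ from Equation~\eqref{eqn:F-relaxtion} together with the stated orthogonality hypotheses.

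First I would invert the three factors of $\mathcal{M}$ individually; each is block unit-triangular or block-diagonal, so
\begin{align*}
    \mathcal{M}^{-1} =
    \left[ \begin{array}{c c}
        \bmat{I}          & \mathbf{0} \\
        \bmat{P}_{FC}     & \bmat{I}
    \end{array} \right]
    \left[ \begin{array}{c c}
        \bmat{A}_{H}^{-1} & \mathbf{0} \\
        \mathbf{0}        & \bmat{A}_{FF}^{-1}
    \end{array} \right]
    \left[ \begin{array}{c c}
        \bmat{I}   & \bmat{R}_{CF} \\
        \mathbf{0} & \bmat{I}
    \end{array} \right]
    =
    \left[ \begin{array}{c c}
        \bmat{A}_{H}^{-1}              & \bmat{A}_{H}^{-1}\bmat{R}_{CF} \\
        \bmat{P}_{FC}\bmat{A}_{H}^{-1} & \bmat{P}_{FC}\bmat{A}_{H}^{-1}\bmat{R}_{CF} + \bmat{A}_{FF}^{-1}
    \end{array} \right].
\end{align*}
Comparing with the block forms of $\mathcal{P}$ and $\mathcal{R}$ in the statement and with $\mathcal{B}_F^{-1}$ in Equation~\eqref{eqn:F-relaxtion} shows $\mathcal{M}^{-1} = \mathcal{P}\bmat{A}_{H}^{-1}\mathcal{R} + \mathcal{B}_F^{-1}$; right-multiplying by $\mathcal{A}$ and subtracting from $\mathcal{I}$ then gives Equation~\eqref{eqn:add} at once.

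For the multiplicative statements I would write $E_c = \mathcal{I} - \mathcal{P}\bmat{A}_{H}^{-1}\mathcal{R}\mathcal{A}$ and $E_s = \mathcal{I} - \mathcal{B}_F^{-1}\mathcal{A}$, so that~\eqref{eqn:add} reads $\mathcal{I} - \mathcal{M}^{-1}\mathcal{A} = E_c + E_s - \mathcal{I}$. Expanding the two products gives
\begin{align*}
    E_c E_s &= (E_c + E_s - \mathcal{I}) + \mathcal{P}\bmat{A}_{H}^{-1}\,\mathcal{R}\mathcal{A}\,\mathcal{B}_F^{-1}\mathcal{A}, \\
    E_s E_c &= (E_c + E_s - \mathcal{I}) + \mathcal{B}_F^{-1}\mathcal{A}\,\mathcal{P}\,\bmat{A}_{H}^{-1}\mathcal{R}\mathcal{A},
\end{align*}
so it remains to kill the trailing term in each line. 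Substituting $\mathcal{B}_F^{-1} = \mathcal{Q}(\mathcal{Q}^\top\mathcal{A}\mathcal{Q})^{-1}\mathcal{Q}^\top$ gives $\mathcal{R}\mathcal{A}\,\mathcal{B}_F^{-1}\mathcal{A} = (\mathcal{R}\mathcal{A}\mathcal{Q})(\mathcal{Q}^\top\mathcal{A}\mathcal{Q})^{-1}\mathcal{Q}^\top\mathcal{A}$, which vanishes when $\mathcal{R}\mathcal{A}\mathcal{Q} = 0$, and symmetrically $\mathcal{B}_F^{-1}\mathcal{A}\,\mathcal{P} = \mathcal{Q}(\mathcal{Q}^\top\mathcal{A}\mathcal{Q})^{-1}(\mathcal{Q}^\top\mathcal{A}\mathcal{P})$, which vanishes when $\mathcal{Q}^\top\mathcal{A}\mathcal{P} = 0$; this yields the two claimed factorizations of $\mathcal{I} - \mathcal{M}^{-1}\mathcal{A}$.

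The only place requiring a little care is the block arithmetic in the first step --- performing the inversion and recognizing the split $\mathcal{M}^{-1} = \mathcal{P}\bmat{A}_{H}^{-1}\mathcal{R} + \mathcal{B}_F^{-1}$; after that the argument is a two-line manipulation. I do not anticipate a genuine obstacle, since the lemma is essentially the classical ``additive versus multiplicative two-level'' identity transcribed to this $C/F$ block decomposition, with the cross terms controlled exactly by the stated orthogonality conditions.
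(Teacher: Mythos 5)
Your proposal is correct and follows essentially the same route as the paper: invert the three-factor decomposition of $\mathcal{M}$ to obtain the additive split $\mathcal{M}^{-1} = \mathcal{P}\bmat{A}_{H}^{-1}\mathcal{R} + \mathcal{B}_{F}^{-1}$, and then use the representation $\mathcal{B}_{F}^{-1} = \mathcal{Q}(\mathcal{Q}^{\top}\mathcal{A}\mathcal{Q})^{-1}\mathcal{Q}^{\top}$ together with the orthogonality hypotheses to cancel the cross term and pass to the multiplicative factorizations. The only cosmetic difference is that the paper reads off $\mathcal{P}\bmat{A}_{H}^{-1}\mathcal{R} + \mathcal{Q}\bmat{A}_{FF}^{-1}\mathcal{Q}^{\top}$ directly by identifying the triangular factors with $\left[\,\mathcal{P}\ \ \mathcal{Q}\,\right]$ and $\bigl[\mathcal{R};\ \mathcal{Q}^{\top}\bigr]$, whereas you multiply out the $2\times 2$ block form of $\mathcal{M}^{-1}$ and then compare entries.
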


\begin{proof}
    From the block decomposition form of $\mathcal{M}$, it is easy to obtain
    \begin{align} \label{eqn:M-inverse-LDU}
        \mathcal{M}^{-1} = 
        \left[ \begin{array}{c c}
            \bmat{I}              & \mathbf{0} \\
            \bmat{P}_{FC}     & \bmat{I}
        \end{array} \right]   
        \left[ \begin{array}{c c}
            \bmat{A}_{H}^{-1}      & \mathbf{0} \\
            \mathbf{0}    & \bmat{A}_{FF}^{-1}
        \end{array} \right] 
        \left[ \begin{array}{c c}
            \bmat{I}   & \bmat{R}_{CF} \\
            \mathbf{0} & \bmat{I}
        \end{array} \right] ,
    \end{align}
    where the upper and lower triangular blocks can be written as:
    \begin{align*} 
        \left[ \begin{array}{c c}
            \bmat{I}          & \mathbf{0} \\
            \bmat{P}_{FC}     & \bmat{I}
        \end{array} \right]   
        =
        \left[ \begin{array}{c c}
            \mathcal{P}  & \mathcal{Q} 
        \end{array} \right]  ,  \quad
        \left[ \begin{array}{c c}
            \bmat{I}   & \bmat{R}_{CF} \\
            \mathbf{0} & \bmat{I}
        \end{array} \right] 
        =
        \left[ \begin{array}{c}
            \mathcal{R}  \\
            \mathcal{Q}^{\top} 
        \end{array} \right],
    \end{align*}
    Substituting into Equation~\eqref{eqn:M-inverse-LDU} gives
    \begin{align*}
        \mathcal{M}^{-1} = 
        \left[ \begin{array}{c c}
            \mathcal{P}  & \mathcal{Q} 
        \end{array} \right]
        \left[ \begin{array}{c c}
            \bmat{A}_{H}^{-1}      & \mathbf{0} \\
            \mathbf{0}    & \bmat{A}_{FF}^{-1}
        \end{array} \right] 
        \left[ \begin{array}{c}
            \mathcal{R}  \\
            \mathcal{Q}^{\top} 
        \end{array} \right]
        =
        \mathcal{P} \bmat{A}_{H}^{-1} \mathcal{R} +
        \mathcal{Q} \bmat{A}_{FF}^{-1} \mathcal{Q}^{\top}   ,     
    \end{align*}
    thus
    \begin{align*}
        \mathcal{I} - \mathcal{M}^{-1} \mathcal{A} =
        \mathcal{I} - \mathcal{P} \bmat{A}_{H}^{-1} \mathcal{R} \mathcal{A} -
        \mathcal{Q} \bmat{A}_{FF}^{-1} \mathcal{Q}^{\top} \mathcal{A} .
    \end{align*}
    Clearly, if $\mathcal{R} \mathcal{A} \mathcal{Q} = 0 $, then
    \begin{align*}
       \mathcal{I} - \mathcal{P} \bmat{A}_{H}^{-1} \mathcal{R} \mathcal{A} -
        \mathcal{Q} \bmat{A}_{FF}^{-1} \mathcal{Q}^{\top} \mathcal{A} 
        =
        \left( \mathcal{I} - \mathcal{P} \bmat{A}_{H}^{-1} \mathcal{R}\mathcal{A} \right) \left(\mathcal{I} - \mathcal{Q} \bmat{A}_{FF}^{-1} \mathcal{Q}^{\top} \mathcal{A} \right). 
    \end{align*}
    If $\mathcal{Q}^{\top} \mathcal{A} \mathcal{P} = 0 $, then
    \begin{align*}
        \mathcal{I} - \mathcal{P} \bmat{A}_{H}^{-1} \mathcal{R} \mathcal{A} -
        \mathcal{Q} \bmat{A}_{FF}^{-1} \mathcal{Q}^{\top} \mathcal{A} 
        =
        \left(\mathcal{I} - \mathcal{Q} \bmat{A}_{FF}^{-1} \mathcal{Q}^{\top} \mathcal{A} \right) \left( \mathcal{I} - \mathcal{P} \bmat{A}_{H}^{-1} \mathcal{R}\mathcal{A} \right) .
    \end{align*}
\end{proof}

In Lemma~\ref{lemma:add-equal-multi}, the iteration format given by Equation~\eqref{eqn:add} is referred to as the additive two-level method~\cite{wiesner2014multigrid}. This implies that the exact smoothing of the fine points and the coarse grid correction can be performed simultaneously. If the restriction operator $\mathcal{R}$ satisfies $\mathcal{R} \mathcal{A} \mathcal{Q} = 0$, then the additive two-level method can be represented as a two-level method with pre-smoothing; if the interpolation operator $\mathcal{P}$ satisfies $\mathcal{Q}^{\top} \mathcal{A} \mathcal{P} = 0$, then the additive two-level method can be represented as a two-level method with post-smoothing. 

The following analysis considers the ideal restriction operator $\hat{\mathcal{R}}$ and ideal interpolation operator $\hat{\mathcal{P}}$ defined by Equations~\eqref{eqn:ideal-R} and \eqref{eqn:ideal-P}, respectively, and the exact fine-point smoother $\mathcal{B}_{F}$ defined by Equation~\eqref{eqn:F-relaxtion}, to draw conclusions on the convergence of the two-level method.

\begin{proposition}\label{prop:F-relaxtion-direct}
    With the restriction operator $\hat{\mathcal{R}}$ and interpolation operator $\hat{\mathcal{P}}$ defined by Equations~\eqref{eqn:ideal-R} and \eqref{eqn:ideal-P}, and the coarse grid operator defined as $\hat{\mathcal{R}} \mathcal{A} \hat{\mathcal{P}}$, if the coarse grid system is solved exactly and the smoother is $\mathcal{B}_{F}$ as defined in Equation~\eqref{eqn:F-relaxtion}, then the resulting two-level method is a direct method and satisfies
    \begin{align}
        \left(\mathcal{I} - \hat{\mathcal{P}} (\hat{\mathcal{R}} \mathcal{A} \hat{\mathcal{P}})^{-1} \hat{\mathcal{R}} \mathcal{A} \right) \left(\mathcal{I} -\mathcal{B}_{F}^{-1}  \mathcal{A} \right)
        & = \bmat{0},  \\
        \left(\mathcal{I} -\mathcal{B}_{F}^{-1}  \mathcal{A} \right) \left(\mathcal{I} - \hat{\mathcal{P}} (\hat{\mathcal{R}} \mathcal{A} \hat{\mathcal{P}})^{-1} \hat{\mathcal{R}} \mathcal{A} \right)
        & = \bmat{0}.
    \end{align}
\end{proposition}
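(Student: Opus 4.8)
The plan is to reduce the statement to Lemma~\ref{lemma:add-equal-multi} by checking that, for the ideal operators, \emph{both} orthogonality hypotheses of that lemma hold simultaneously. Take $\mathcal{P}=\hat{\mathcal{P}}$, $\mathcal{R}=\hat{\mathcal{R}}$, and $\bmat{A}_{H}=\hat{\mathcal{A}}_{H}=\hat{\mathcal{R}}\mathcal{A}\hat{\mathcal{P}}$, which by~\eqref{eqn:ideal-coarse} is precisely the Schur complement $\bmat{S}=\bmat{A}_{CC}-\bmat{A}_{CF}\bmat{A}_{FF}^{-1}\bmat{A}_{FC}$. First I would record that $\mathcal{A}\mathcal{Q}=[\bmat{A}_{CF};\bmat{A}_{FF}]$ (the $F$-columns of $\mathcal{A}$) and $\mathcal{Q}^{\top}\mathcal{A}=[\bmat{A}_{FC},\bmat{A}_{FF}]$ (the $F$-rows). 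Then a one-line block product with $\hat{\mathcal{R}}=[\bmat{I}_{CC},\,-\bmat{A}_{CF}\bmat{A}_{FF}^{-1}]$ from~\eqref{eqn:ideal-R} gives $\hat{\mathcal{R}}\mathcal{A}\mathcal{Q}=\bmat{A}_{CF}-\bmat{A}_{CF}\bmat{A}_{FF}^{-1}\bmat{A}_{FF}=\mathbf{0}$, and symmetrically, using $\hat{\mathcal{P}}=[\bmat{I}_{CC};\,-\bmat{A}_{FF}^{-1}\bmat{A}_{FC}]$ from~\eqref{eqn:ideal-P}, $\mathcal{Q}^{\top}\mathcal{A}\hat{\mathcal{P}}=\bmat{A}_{FC}-\bmat{A}_{FF}\bmat{A}_{FF}^{-1}\bmat{A}_{FC}=\mathbf{0}$. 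Hence both hypotheses of Lemma~\ref{lemma:add-equal-multi} are satisfied, and the lemma yields
\begin{align*}
\mathcal{I}-\mathcal{M}^{-1}\mathcal{A}=\left(\mathcal{I}-\hat{\mathcal{P}}\bmat{S}^{-1}\hat{\mathcal{R}}\mathcal{A}\right)\left(\mathcal{I}-\mathcal{B}_{F}^{-1}\mathcal{A}\right)=\left(\mathcal{I}-\mathcal{B}_{F}^{-1}\mathcal{A}\right)\left(\mathcal{I}-\hat{\mathcal{P}}\bmat{S}^{-1}\hat{\mathcal{R}}\mathcal{A}\right),
\end{align*}
where $\mathcal{M}$ is the two-level preconditioner of Lemma~\ref{lemma:add-equal-multi} built from $\hat{\mathcal{P}},\hat{\mathcal{R}},\bmat{A}_{FF},\bmat{S}$, and $\mathcal{B}_{F}^{-1}=\mathcal{Q}\bmat{A}_{FF}^{-1}\mathcal{Q}^{\top}$ is the exact $F$-smoother~\eqref{eqn:F-relaxtion}.

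The remaining step is to show this common error operator vanishes, i.e.\ that $\mathcal{M}=\mathcal{A}$. I would substitute $\bmat{R}_{CF}=-\bmat{A}_{CF}\bmat{A}_{FF}^{-1}$, $\bmat{P}_{FC}=-\bmat{A}_{FF}^{-1}\bmat{A}_{FC}$, and $\bmat{A}_{H}=\bmat{S}$ into the block-triangular factorization of $\mathcal{M}$ from Lemma~\ref{lemma:add-equal-multi}; the resulting product is term-for-term the block $LDU$ factorization of $\mathcal{A}$ recorded in~\eqref{eqn:A-block-LDU}. Therefore $\mathcal{M}=\mathcal{A}$, so $\mathcal{I}-\mathcal{M}^{-1}\mathcal{A}=\mathbf{0}$, and both displayed products in the proposition equal $\mathbf{0}$; equivalently, one sweep of Algorithm~\ref{algorithm:solve-strategy-one} annihilates the error, so the two-level scheme is a direct method.

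As a self-contained alternative avoiding the lemma, one can compute the two factors directly in $C/F$ block form: $\hat{\mathcal{R}}\mathcal{A}=[\bmat{S},\mathbf{0}]$, hence $\hat{\mathcal{P}}\bmat{S}^{-1}\hat{\mathcal{R}}\mathcal{A}$ has block rows $[\bmat{I}_{CC},\mathbf{0}]$ and $[-\bmat{A}_{FF}^{-1}\bmat{A}_{FC},\mathbf{0}]$, while $\mathcal{B}_{F}^{-1}\mathcal{A}$ has block rows $[\mathbf{0},\mathbf{0}]$ and $[\bmat{A}_{FF}^{-1}\bmat{A}_{FC},\bmat{I}_{FF}]$; the two complementary ``half-identity'' matrices $\mathcal{I}-\hat{\mathcal{P}}\bmat{S}^{-1}\hat{\mathcal{R}}\mathcal{A}$ and $\mathcal{I}-\mathcal{B}_{F}^{-1}\mathcal{A}$ then multiply to $\mathbf{0}$ in either order by a direct $2\times2$ block multiplication. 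There is no genuine obstacle in this proof; the only points requiring care are keeping the $C/F$ ordering consistent across $\hat{\mathcal{P}}$, $\hat{\mathcal{R}}$, $\mathcal{Q}$ and $\mathcal{A}$, and using $\hat{\mathcal{A}}_{H}=\bmat{S}$ so that the Galerkin coarse operator cancels cleanly against $\hat{\mathcal{P}}$ and $\hat{\mathcal{R}}$.
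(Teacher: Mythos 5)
Your proof is correct, and both of your routes lead cleanly to the result. Your main route differs slightly in organization from the paper's: you invoke Lemma~\ref{lemma:add-equal-multi} to obtain the two multiplicative factorizations of $\mathcal{I}-\mathcal{M}^{-1}\mathcal{A}$, and then separately close the argument by matching the block $LDU$ factorization of $\mathcal{M}$ against~\eqref{eqn:A-block-LDU} to conclude $\mathcal{M}=\mathcal{A}$. The paper instead bypasses the lemma entirely: it writes out the $2\times 2$ block inverse of $\mathcal{A}$ explicitly and recognizes the two summands as $\hat{\mathcal{P}}(\hat{\mathcal{R}}\mathcal{A}\hat{\mathcal{P}})^{-1}\hat{\mathcal{R}}$ and $\mathcal{Q}(\mathcal{Q}^{\top}\mathcal{A}\mathcal{Q})^{-1}\mathcal{Q}^{\top}$, so $\mathcal{I}-\mathcal{A}^{-1}\mathcal{A}=\mathbf{0}$ is already the additive identity, and then applies the same orthogonality observations $\hat{\mathcal{R}}\mathcal{A}\mathcal{Q}=\mathcal{Q}^{\top}\mathcal{A}\hat{\mathcal{P}}=\mathbf{0}$ to turn the additive identity into the two products. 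The underlying facts are the same in both arguments (orthogonality of the ideal transfer operators with respect to $\mathcal{Q}$, plus the block $LDU$ structure of $\mathcal{A}$); yours buys modularity and reuse of an already-proved lemma, while the paper's buys a shorter, self-contained computation that makes the decomposition of $\mathcal{A}^{-1}$ visible. Your self-contained alternative, computing $\hat{\mathcal{R}}\mathcal{A}=[\bmat{S},\;\mathbf{0}]$ and then the two complementary block projectors directly, is in spirit the closest to the paper's direct calculation and is equally valid.
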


\begin{proof}
    For the matrix $\mathcal{A}$, if $\bmat{A}_{FF}$ is invertible, then
    \begin{eqnarray*}
        \mathcal{A}^{-1} &=& 
            \left[ \begin{array}{c c}
                \bmat{S}^{-1}  & -\bmat{S}^{-1} \bmat{A}_{CF} \bmat{A}_{FF}^{-1}\\
                -\bmat{A}_{FF}^{-1} \bmat{A}_{FC} \bmat{S}^{-1} & \bmat{A}_{FF}^{-1} \bmat{A}_{FC} \bmat{S}^{-1} \bmat{A}_{CF} \bmat{A}_{FF}^{-1} 
            \end{array} \right] +       
            \left[ \begin{array}{c c}
                \bmat{0}  & \bmat{0}\\
                \bmat{0} & \bmat{A}_{FF}^{-1} 
            \end{array} \right] \\
            &=& 
            \hat{\mathcal{P}} (\hat{\mathcal{R}} \mathcal{A} \hat{\mathcal{P}})^{-1} \hat{\mathcal{R}} + \mathcal{Q} (\mathcal{Q}^\top \mathcal{A} \mathcal{Q})^{-1} \mathcal{Q}^\top,     
    \end{eqnarray*}
    thus
    \begin{align*}
        \bmat{0} = \mathcal{I} - \mathcal{A}^{-1} \mathcal{A} 
        = \mathcal{I} - \hat{\mathcal{P}} (\hat{\mathcal{R}} \mathcal{A} \hat{\mathcal{P}})^{-1} \hat{\mathcal{R}} \mathcal{A} - \mathcal{Q} (\mathcal{Q}^\top \mathcal{A} \mathcal{Q})^{-1} \mathcal{Q}^\top \mathcal{A}.
    \end{align*}
    Noting that $\hat{\mathcal{R}} \mathcal{A} \mathcal{Q} = \mathcal{Q}^\top \mathcal{A} \hat{\mathcal{P}} = \bmat{0}$, the following equalities hold:
    \begin{align*}
        \bmat{0} = \mathcal{I} - \mathcal{A}^{-1} \mathcal{A} 
        &= \left(\mathcal{I} - \hat{\mathcal{P}} (\hat{\mathcal{R}} \mathcal{A} \hat{\mathcal{P}})^{-1} \hat{\mathcal{R}} \mathcal{A} \right)  
        \left(\mathcal{I} - \mathcal{Q} (\mathcal{Q}^\top \mathcal{A} \mathcal{Q})^{-1} \mathcal{Q}^\top \mathcal{A} \right) \\
        &= \left(\mathcal{I} - \mathcal{Q} (\mathcal{Q}^\top \mathcal{A} \mathcal{Q})^{-1} \mathcal{Q}^\top \mathcal{A} \right) 
        \left(\mathcal{I} - \hat{\mathcal{P}} (\hat{\mathcal{R}} \mathcal{A} \hat{\mathcal{P}})^{-1} \hat{\mathcal{R}} \mathcal{A} \right).
    \end{align*}     
\end{proof}

In Propositions~\ref{prop:F-relaxtion-residual} and \ref{prop:F-relaxtion-direct}, it is required that the coarse grid system is solved exactly, which is difficult to achieve in practical applications. Further examining the form of the coarse grid operator $\hat{\mathcal{A}}_{H} = \hat{\mathcal{R}} \mathcal{A} \hat{\mathcal{P}} = \bmat{S}$ in Equation~\eqref{eqn:ideal-coarse},  $\hat{\mathcal{A}}_{H}$ is symmetric positive definite, which allows it to be solved using AMG. Let the mathematical operator for solving the coarse grid system $\hat{\mathcal{A}}_{H} \bvec{e}_{H} = \bvec{f}_{H}$ using AMG be denoted as $\hat{\mathcal{G}}_{H}^{-1}$.

Next, we analyze the effect of performing $\nu$ AMG V-cycles on the coarse grid system $\hat{\mathcal{A}}_{H} \bvec{e}_{H} = \bvec{f}_{H}$ in Algorithm~\ref{algorithm:solve-strategy-one}. If $\nu = 1$, the entire algorithm is a V-cycle, with the only difference being that the coarsening, smoothing, and interpolation at the first level are distinct from those at other levels.

\begin{theorem}\label{theorem:M-F-AMG}
    With the restriction operator $\hat{\mathcal{R}}$ and interpolation operator $\hat{\mathcal{P}}$ defined by Equations~\eqref{eqn:ideal-R} and \eqref{eqn:ideal-P}, and the coarse grid operator defined as $\hat{\mathcal{A}}_{H} = \hat{\mathcal{R}} \mathcal{A} \hat{\mathcal{P}}$, the smoother is $\mathcal{B}_{F}$. When AMG is used to solve the coarse grid system $\hat{\mathcal{A}}_{H} \bvec{e}_{H} = \bvec{f}_{H}$, let the mathematical operator be denoted as $\hat{\mathcal{G}}_{H}^{-1}$. Define
    \begin{align}\label{eqn:M-F-AMG-block-LDU}
        \mathcal{M} 
        =
        \left[ \begin{array}{c c}
            \bmat{I}   & \bmat{A}_{CF} \bmat{A}_{FF}^{-1} \\
            \mathbf{0} & \bmat{I}
        \end{array} \right] 
        \left[ \begin{array}{c c}
            \hat{\mathcal{G}}_{H}      & \mathbf{0} \\
            \mathbf{0}    & \bmat{A}_{FF}
        \end{array} \right] 
        \left[ \begin{array}{c c}
            \bmat{I}              & \mathbf{0} \\
            \bmat{A}_{FF}^{-1} \bmat{A}_{FC}    & \bmat{I}
        \end{array} \right],
    \end{align}
    then
    \begin{align}\label{eqn:M-F-AMG-iter}
        \mathcal{I} - \mathcal{M}^{-1} \mathcal{A} 
        = \left(\mathcal{I} - \hat{\mathcal{P}} \hat{\mathcal{G}}_{H}^{-1} \hat{\mathcal{R}} \mathcal{A} \right) \left(\mathcal{I} -\mathcal{B}_{F}^{-1}  \mathcal{A} \right) 
        = \left(\mathcal{I} -\mathcal{B}_{F}^{-1}  \mathcal{A} \right) \left(\mathcal{I} - \hat{\mathcal{P}} \hat{\mathcal{G}}_{H}^{-1} \hat{\mathcal{R}} \mathcal{A} \right),
    \end{align}
    and the eigenvalue distribution of $\mathcal{M}^{-1} \mathcal{A}$ is given by
    \begin{align}\label{eqn:M-F-AMG-eigenvalue}
        \sigma\left( \mathcal{M}^{-1} \mathcal{A} \right) = \left\{ 1 \right\} \cup  \left\{ \sigma_{j}: \; \sigma_{j} \in \sigma\left( \hat{\mathcal{G}}_{H}^{-1} \bmat{S} \right) \right\},
    \end{align}
    where $\sigma(\bmat{K})$ denotes the spectrum of the matrix $\bmat{K}$, and $\sigma_{j}$ represents an eigenvalue in the set.
\end{theorem}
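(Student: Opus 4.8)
The plan is to exploit that $\mathcal{A}$ and $\mathcal{M}$ share a common block triangular outer factorization, differing only in the $(C,C)$ block of the block-diagonal middle factor. First I would invert the block form~\eqref{eqn:M-F-AMG-block-LDU} of $\mathcal{M}$, which gives
\begin{align*}
    \mathcal{M}^{-1} =
    \left[ \begin{array}{c c}
        \bmat{I}   & \mathbf{0} \\
        -\bmat{A}_{FF}^{-1}\bmat{A}_{FC} & \bmat{I}
    \end{array} \right]
    \left[ \begin{array}{c c}
        \hat{\mathcal{G}}_{H}^{-1}   & \mathbf{0} \\
        \mathbf{0} & \bmat{A}_{FF}^{-1}
    \end{array} \right]
    \left[ \begin{array}{c c}
        \bmat{I}   & -\bmat{A}_{CF}\bmat{A}_{FF}^{-1} \\
        \mathbf{0} & \bmat{I}
    \end{array} \right].
\end{align*}
Comparing with the block forms of $\hat{\mathcal{P}},\hat{\mathcal{R}}$ in~\eqref{eqn:ideal-P}--\eqref{eqn:ideal-R} and of $\mathcal{Q}$ in~\eqref{eqn:Q}, the first outer factor is the block matrix whose two block-columns are $\hat{\mathcal{P}}$ and $\mathcal{Q}$, and the last outer factor is the block matrix whose two block-rows are $\hat{\mathcal{R}}$ and $\mathcal{Q}^{\top}$. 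Multiplying out the three factors yields the additive identity
\begin{align*}
    \mathcal{M}^{-1} = \hat{\mathcal{P}}\hat{\mathcal{G}}_{H}^{-1}\hat{\mathcal{R}} + \mathcal{Q}\bmat{A}_{FF}^{-1}\mathcal{Q}^{\top} = \hat{\mathcal{P}}\hat{\mathcal{G}}_{H}^{-1}\hat{\mathcal{R}} + \mathcal{B}_{F}^{-1},
\end{align*}
the last equality being~\eqref{eqn:F-relaxtion}. Hence $\mathcal{I} - \mathcal{M}^{-1}\mathcal{A} = \mathcal{I} - \hat{\mathcal{P}}\hat{\mathcal{G}}_{H}^{-1}\hat{\mathcal{R}}\mathcal{A} - \mathcal{B}_{F}^{-1}\mathcal{A}$, which is exactly the situation of Lemma~\ref{lemma:add-equal-multi} with $\bmat{A}_{H}$ replaced by $\hat{\mathcal{G}}_{H}$.

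To obtain the two multiplicative forms in~\eqref{eqn:M-F-AMG-iter}, I would invoke the orthogonality relations $\hat{\mathcal{R}}\mathcal{A}\mathcal{Q} = \mathbf{0}$ and $\mathcal{Q}^{\top}\mathcal{A}\hat{\mathcal{P}} = \mathbf{0}$ established in the proof of Proposition~\ref{prop:F-relaxtion-direct}. Writing $\bmat{X} = \hat{\mathcal{P}}\hat{\mathcal{G}}_{H}^{-1}\hat{\mathcal{R}}\mathcal{A}$ and $\bmat{Y} = \mathcal{B}_{F}^{-1}\mathcal{A} = \mathcal{Q}\bmat{A}_{FF}^{-1}\mathcal{Q}^{\top}\mathcal{A}$, the relation $\hat{\mathcal{R}}\mathcal{A}\mathcal{Q} = \mathbf{0}$ forces $\bmat{X}\bmat{Y} = \mathbf{0}$, so that $(\mathcal{I}-\bmat{X})(\mathcal{I}-\bmat{Y}) = \mathcal{I}-\bmat{X}-\bmat{Y} = \mathcal{I}-\mathcal{M}^{-1}\mathcal{A}$; symmetrically $\mathcal{Q}^{\top}\mathcal{A}\hat{\mathcal{P}} = \mathbf{0}$ forces $\bmat{Y}\bmat{X} = \mathbf{0}$ and gives the reversed product. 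This is just Lemma~\ref{lemma:add-equal-multi} once its two hypotheses are verified for the ideal operators, which is the content of the cited relations.

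For the spectrum~\eqref{eqn:M-F-AMG-eigenvalue}, the key observation is that $\mathcal{A}$ itself factors, by~\eqref{eqn:A-block-LDU} and the identification $\bmat{S} = \hat{\mathcal{A}}_{H}$ from~\eqref{eqn:ideal-coarse}, with exactly the same unit lower and upper triangular outer factors that appear in~\eqref{eqn:M-F-AMG-block-LDU}; only the $(C,C)$ block of the middle factor changes, from $\bmat{S}$ for $\mathcal{A}$ to $\hat{\mathcal{G}}_{H}$ for $\mathcal{M}$. Cancelling the shared outer factors gives
\begin{align*}
    \mathcal{M}^{-1}\mathcal{A}
    =
    \left[ \begin{array}{c c}
        \bmat{I}   & \mathbf{0} \\
        -\bmat{A}_{FF}^{-1}\bmat{A}_{FC} & \bmat{I}
    \end{array} \right]
    \left[ \begin{array}{c c}
        \hat{\mathcal{G}}_{H}^{-1}\bmat{S}   & \mathbf{0} \\
        \mathbf{0} & \bmat{I}_{FF}
    \end{array} \right]
    \left[ \begin{array}{c c}
        \bmat{I}   & \mathbf{0} \\
        \bmat{A}_{FF}^{-1}\bmat{A}_{FC} & \bmat{I}
    \end{array} \right],
\end{align*}
so $\mathcal{M}^{-1}\mathcal{A}$ is similar to the block-diagonal matrix with diagonal blocks $\hat{\mathcal{G}}_{H}^{-1}\bmat{S}$ and $\bmat{I}_{FF}$; therefore $\sigma(\mathcal{M}^{-1}\mathcal{A}) = \sigma(\hat{\mathcal{G}}_{H}^{-1}\bmat{S}) \cup \{1\}$, which is the stated assertion with $\bmat{S} = \hat{\mathcal{A}}_{H}$.

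I expect the only delicate points to be bookkeeping: keeping the sign pattern straight when inverting the triangular blocks so that their columns and rows line up exactly with $\hat{\mathcal{P}},\mathcal{Q}$ and $\hat{\mathcal{R}},\mathcal{Q}^{\top}$ respectively, and observing that $\hat{\mathcal{G}}_{H}$ --- the operator implicitly defined by $\nu$ AMG V-cycles applied to $\hat{\mathcal{A}}_{H} = \bmat{S}$ --- is nonsingular, so that $\mathcal{M}$ is well defined and the similarity transformation is legitimate. There should be no genuine analytic obstacle: the entire argument reduces to the shared triangular structure of $\mathcal{A}$ and $\mathcal{M}$ together with the two orthogonality relations borrowed from Proposition~\ref{prop:F-relaxtion-direct}.
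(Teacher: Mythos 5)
Your proposal is correct and follows essentially the same route as the paper: both establish the multiplicative iterate formula~\eqref{eqn:M-F-AMG-iter} by observing that $\mathcal{M}^{-1} = \hat{\mathcal{P}}\hat{\mathcal{G}}_{H}^{-1}\hat{\mathcal{R}} + \mathcal{B}_{F}^{-1}$ and then appealing to the orthogonality relations $\hat{\mathcal{R}}\mathcal{A}\mathcal{Q}=\mathcal{Q}^{\top}\mathcal{A}\hat{\mathcal{P}}=\mathbf{0}$ via Lemma~\ref{lemma:add-equal-multi}, and both obtain the spectrum by cancelling the common outer block-triangular factors of $\mathcal{A}$ (from~\eqref{eqn:A-block-LDU}) and $\mathcal{M}$ (from~\eqref{eqn:M-F-AMG-block-LDU}) to exhibit $\mathcal{M}^{-1}\mathcal{A}$ as similar to $\operatorname{diag}(\hat{\mathcal{G}}_{H}^{-1}\bmat{S},\bmat{I})$. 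Your added remark about the nonsingularity of $\hat{\mathcal{G}}_{H}$ is a reasonable hygiene point that the paper leaves implicit.
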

\begin{proof}
    Noting that $\hat{\mathcal{R}} \mathcal{A} \mathcal{Q} = \mathcal{Q}^\top \mathcal{A} \hat{\mathcal{P}} = \bmat{0}$, by Lemma~\ref{lemma:add-equal-multi}, it is easy to verify Equation~\eqref{eqn:M-F-AMG-iter}. Further, from the block form of matrix $\mathcal{A}$ in Equation~\eqref{eqn:A-block-LDU} and the block form of matrix $\mathcal{M}$ in Equation~\eqref{eqn:M-F-AMG-block-LDU}, we can obtain
    \begin{align*}
        \mathcal{M}^{-1} \mathcal{A} = &
        \left[ \begin{array}{c c}
            \bmat{I}              & \mathbf{0} \\
            \bmat{A}_{FF}^{-1} \bmat{A}_{FC}    & \bmat{I}
        \end{array} \right]^{-1}
        \left[ \begin{array}{c c}
            \hat{\mathcal{G}}_{H}^{-1}      & \mathbf{0} \\
            \mathbf{0}    & \bmat{A}_{FF}^{-1}
        \end{array} \right] 
        \left[ \begin{array}{c c}
            \bmat{S}      & \mathbf{0} \\
            \mathbf{0}    & \bmat{A}_{FF}
        \end{array} \right] 
        \left[ \begin{array}{c c}
            \bmat{I}              & \mathbf{0} \\
            \bmat{A}_{FF}^{-1} \bmat{A}_{FC}    & \bmat{I}
        \end{array} \right] \\
         = &
        \left[ \begin{array}{c c}
            \bmat{I}              & \mathbf{0} \\
            \bmat{A}_{FF}^{-1} \bmat{A}_{FC}    & \bmat{I}
        \end{array} \right]^{-1}
        \left[ \begin{array}{c c}
            \hat{\mathcal{G}}_{H}^{-1} \bmat{S}     & \mathbf{0} \\
            \mathbf{0}    & \bmat{I}
        \end{array} \right] 
        \left[ \begin{array}{c c}
            \bmat{I}              & \mathbf{0} \\
            \bmat{A}_{FF}^{-1} \bmat{A}_{FC}    & \bmat{I}
        \end{array} \right],
    \end{align*}
    Therefore, matrix $\mathcal{M}^{-1} \mathcal{A}$ shares the same eigenvalues as
    $$
        \left[ \begin{array}{c c}
            \hat{\mathcal{G}}_{H}^{-1} \bmat{S}     & \mathbf{0} \\
            \mathbf{0}    & \bmat{I} 
        \end{array} \right],
    $$
    proving Equation~\eqref{eqn:M-F-AMG-eigenvalue}.
\end{proof}

Under the assumptions of Theorem~\ref{theorem:M-F-AMG}, the eigenvalues of $\mathcal{M}^{-1} \mathcal{A}$ depend on how well $\hat{\mathcal{G}}_{H}$ approximates $\bmat{S}$. The higher the accuracy of AMG in solving the coarse grid system $\hat{\mathcal{A}}_{H} \bvec{e}_{H} = \bvec{f}_{H}$, the better the convergence of the two-level method.

\subsubsection{sSIMPLE smoother}

In this subsection, we propose a simplified SIMPLE method (sSIMPLE) and analyze its effectiveness as a smoother in the context of the two-level method. Based on the block form of the matrix $\mathcal{A}$ in Equation~\eqref{eqn:CF}, if $\bmat{A}_{CC}$ is invertible, then
\begin{align}\label{eqn:A-LU}
    \mathcal{A} = \left[\begin{array}{cc}
        \bmat{A}_{CC} & \mathbf{0}  \\
        \bmat{A}_{FC} & \bmat{S}_{(C)} 
    \end{array}\right]
    \left[\begin{array}{cc}
        \bmat{I}   & \bmat{A}_{CC}^{-1}\bmat{A}_{CF}  \\
        \mathbf{0} & \bmat{I} 
    \end{array}\right],
\end{align}
where $\bmat{S}_{(C)} = \bmat{A}_{FF} - \bmat{A}_{FC}\bmat{A}_{CC}^{-1}\bmat{A}_{CF}$ is the Schur complement of $\bmat{A}_{CC}$ in $\mathcal{A}$. Based on this decomposition, a preconditioner can be constructed that involves the computation of $\bmat{A}_{CC}^{-1}$. This computation is prohibitively expensive in practical applications. To overcome this issue, we define the following preconditioner:
\begin{align}\label{eqn:M-LU}
    \mathcal{B} = \left[\begin{array}{cc}
        \tilde{\bmat{A}}_{CC} & \mathbf{0}  \\
        \bmat{A}_{FC} & \tilde{\bmat{S}}_{(C)} 
    \end{array}\right]
    \left[\begin{array}{cc}
        \bmat{I}   & \tilde{\bmat{A}}_{CC}^{-1}\bmat{A}_{CF}  \\
        \mathbf{0} & \bmat{I} 
    \end{array}\right],
\end{align}
where $\tilde{\bmat{S}}_{(C)} = \bmat{A}_{FF} - \bmat{A}_{FC}\tilde{\bmat{A}}_{CC}^{-1}\bmat{A}_{CF}$, and $\tilde{\bmat{A}}_{CC}$ is an approximation of $\bmat{A}_{CC}$. In Equation~\eqref{eqn:M-LU}, there are three instances where $\tilde{\bmat{A}}_{CC}$ is involved: the main diagonal block, the approximate Schur complement, and the upper right block. 

If a diagonal approximation is used, i.e., $\tilde{\bmat{A}}_{CC} = \text{diag}\left( \bmat{A}_{CC}\right) := \bmat{D}_{CC}$, then $\tilde{\bmat{S}}_{(C)} = \bmat{A}_{FF} - \bmat{A}_{FC}\bmat{D}_{CC}^{-1}\bmat{A}_{CF}$. We refer to this method as the simplified SIMPLE method (sSIMPLE). The specific form of the sSIMPLE preconditioner is given by:
\begin{align}\label{eqn:M-sSIMPLE}
    \mathcal{B}_{s} = \left[\begin{array}{cc}
        \bmat{D}_{CC} & \mathbf{0}  \\
        \bmat{A}_{FC} & \tilde{\bmat{S}}_{(C)} 
    \end{array}\right]
    \left[\begin{array}{cc}
        \bmat{I}   & \bmat{D}_{CC}^{-1}\bmat{A}_{CF}  \\
        \mathbf{0} & \bmat{I} 
    \end{array}\right]
    =
    \left[\begin{array}{cc}
        \bmat{D}_{CC}   & \bmat{A}_{CF}  \\
        \bmat{A}_{FC}   & \bmat{A}_{FF} 
    \end{array}\right].
\end{align}

When using the sSIMPLE method for smoothing, denote the smoothed vector as $\bvec{x}_{s} = \mathcal{B}_{s}^{-1} \bvec{b}$, i.e., solving $\mathcal{B}_{s} \bvec{x}_{s} = \bvec{b}$. The iterative process can be carried out in two steps:
\begin{enumerate}
    \item[(1)] Solve
    \begin{align}
        \left[\begin{array}{cc}
            \bmat{D}_{CC} & \mathbf{0}  \\
            \bmat{A}_{FC} & \tilde{\bmat{S}}_{(C)} 
        \end{array}\right]
        \left[\begin{array}{c}
            \bvec{p}  \\
            \bvec{q} 
        \end{array}\right]
        =
        \left[\begin{array}{c}
            \bvec{b}_{C}  \\
            \bvec{b}_{F} 
        \end{array}\right],
    \end{align}
    which involves solving two subsystems: first, compute $\bvec{p} = \bmat{D}_{CC}^{-1}  \bvec{b}_{C}$, and then solve $\tilde{\bmat{S}}_{(C)} \bvec{q} = \bvec{b}_{F} - \bmat{A}_{FC} \bvec{p}$. 
    \item[(2)] Back-substitution
    \begin{align}
        \left[\begin{array}{cc}
            \bmat{I}   & \bmat{D}_{CC}^{-1}\bmat{A}_{CF}  \\
            \mathbf{0} & \bmat{I} 
        \end{array}\right]
        \left[ \begin{array}{c}
            \bvec{x}_{s,C} \\
            \bvec{x}_{s,F} 
        \end{array}\right]
        =
        \left[\begin{array}{c}
            \bvec{p}  \\
            \bvec{q} 
        \end{array}\right],        
    \end{align}
    setting $\bvec{x}_{s,F} = \bvec{q}$, and then compute $\bvec{x}_{s,C} = \bvec{p} - \bmat{D}_{CC}^{-1}\bmat{A}_{CF} \bvec{q}$ to obtain the solution vector $\bvec{x}_{s}$.
\end{enumerate}
Thus, the smoothed vector can be expressed as:
\begin{align} \label{eqn:x-sSIMPLE}
    \bvec{x}_{s} = 
    \left[ \begin{array}{c}
        \bvec{x}_{s,C} \\
        \bvec{x}_{s,F} 
    \end{array}\right] =
    \left[ \begin{array}{c}
        \bmat{D}_{CC}^{-1} \bvec{b}_{C} - \bmat{D}_{CC}^{-1} \bmat{A}_{CF} \bvec{q} \\
        \bvec{q} 
    \end{array}\right].
\end{align}
As can be seen from the above process, the single-step iteration of the sSIMPLE method only requires solving a linear system involving the Schur complement, while the rest involves matrix-vector multiplications and vector corrections. Therefore, the computational complexity of this method is relatively low, making it suitable as a smoother for the two-level method.

From Proposition~\ref{prop:strategy-one}, we know that when the coarse grid system is solved exactly, the residual vector after one iteration of the two-level algorithm can be expressed in terms of the smoothed vector. Substituting the smoothed vector obtained from sSIMPLE in Equation~\eqref{eqn:x-sSIMPLE} into the residual expression in Equation~\eqref{eqn:residual}, we have:
\begin{align*}
    \bvec{r}  
    &=
    \left[ \begin{array}{c}
        \bmat{A}_{CF} \bmat{A}_{FF}^{-1} \bvec{b}_{F} - \bmat{A}_{CF} \bvec{x}_{s,F} - \bmat{A}_{CF} \bmat{A}_{FF}^{-1} \bmat{A}_{FC} \bvec{x}_{s,C}\\
        \bvec{b}_{F} - \bmat{A}_{FF} \bvec{x}_{s,F} - \bmat{A}_{FC} \bvec{x}_{s,C}  
    \end{array}\right]  \\
    &=
    \left[ \begin{array}{c}
        \bmat{A}_{CF} \bmat{A}_{FF}^{-1} (\bvec{b}_{F} - \bmat{A}_{FC} \bmat{D}_{CC}^{-1} \bvec{b}_{C}) - \bmat{A}_{CF} \bmat{A}_{FF}^{-1} \tilde{\bmat{S}}_{(C)} \bvec{q}\\
        \bvec{b}_{F} - \bmat{A}_{FC} \bmat{D}_{CC}^{-1} \bvec{b}_{C} - \tilde{\bmat{S}}_{(C)} \bvec{q}
    \end{array}\right]     \\
    &=
    \left[ \begin{array}{c}
        \bmat{A}_{CF} \bmat{A}_{FF}^{-1} \bvec{r}_{\tilde{S}}\\
        \bvec{r}_{\tilde{S}}
    \end{array}\right] ,
\end{align*}
where $\bvec{r}_{\tilde{S}} = \bvec{b}_{F} - \bmat{A}_{FC} \bmat{D}_{CC}^{-1} \bvec{b}_{C} - \tilde{\bmat{S}}_{(C)} \bvec{q}$ represents the residual of the approximate Schur complement subsystem $\tilde{\bmat{S}}_{(C)} \bvec{q} =  \bvec{b}_{F} - \bmat{A}_{FC} \bvec{q}$. 

Further estimating the 1-norm of the residual $\bvec{r}$, we obtain:
\begin{align}
    \| \bvec{r} \|_{1} &= \| \bmat{A}_{CF} \bmat{A}_{FF}^{-1} \bvec{r}_{\tilde{S}} \|_{1} + \| \bvec{r}_{\tilde{S}} \|_{1} \\
    &\leq \left(1 + \| \bmat{A}_{CF} \bmat{A}_{FF}^{-1} \|_{1} \right)  \| \bvec{r}_{\tilde{S}} \|_{1},
\end{align}
Thus, when the subsystem is solved to a certain accuracy, i.e., $\| \bvec{r}_{\tilde{S}} \| = \| \bvec{b}_{F} - \bmat{A}_{FC} \bmat{D}_{CC}^{-1} \bvec{b}_{C} - \tilde{\bmat{S}}_{(C)} \bvec{q} \| < \varepsilon$, the residual of the original system satisfies
\begin{eqnarray*}
    \| \bvec{r} \| \leq C \varepsilon, \quad C = 1 + \| \bmat{A}_{CF} \bmat{A}_{FF}^{-1} \|,
\end{eqnarray*}
leading to the following proposition:
\begin{proposition}\label{prop:M-sSIMPLE-residual}
    With the restriction operator $\hat{\mathcal{R}}$ and interpolation operator $\hat{\mathcal{P}}$ defined by Equations~\eqref{eqn:ideal-R} and \eqref{eqn:ideal-P}, and the coarse grid operator defined by Equation~\eqref{eqn:ideal-coarse} as $\bmat{\mathcal{A}}_{H} = \hat{\mathcal{R}} \mathcal{A} \hat{\mathcal{P}}$, using Algorithm~\ref{algorithm:solve-strategy-one} to solve the linear system $\mathcal{A} \bvec{x} = \bvec{b}$, where the coarse grid system $\bmat{\mathcal{A}_{H}} \bvec{e}_{H} = \bvec{f}_{H}$ is solved exactly, if sSIMPLE is used as the smoother, as defined in Equation~\eqref{eqn:M-sSIMPLE}, and $\| \bvec{b}_{F} - \bmat{A}_{FC} \bmat{D}_{CC}^{-1} \bvec{b}_{C} - \tilde{\bmat{S}}_{(C)} \bvec{q} \| < \varepsilon$, then after one iteration of the two-level algorithm, the residual of the original system satisfies
    \begin{align}
         \| \bvec{r} \| \leq  C \varepsilon, \quad C = 1 + \| \bmat{A}_{CF} \bmat{A}_{FF}^{-1} \|.
    \end{align}
\end{proposition}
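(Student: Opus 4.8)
The plan is to reduce everything to Proposition~\ref{prop:strategy-one}, which already expresses the residual after one two-level iteration (with the ideal operators $\hat{\mathcal{R}}$, $\hat{\mathcal{P}}$ and an exact coarse solve) purely in terms of the smoothed vector $\bvec{x}_{s} = [\bvec{x}_{s,C}, \bvec{x}_{s,F}]^{\top}$ via Equation~\eqref{eqn:residual}. Since sSIMPLE enters only as the pre-smoother, the single step $\mathcal{B}_{s}\bvec{x}_{s} = \bvec{b}$ analyzed above yields the closed form~\eqref{eqn:x-sSIMPLE}: $\bvec{x}_{s,F} = \bvec{q}$ and $\bvec{x}_{s,C} = \bmat{D}_{CC}^{-1}\bvec{b}_{C} - \bmat{D}_{CC}^{-1}\bmat{A}_{CF}\bvec{q}$, where $\bvec{q}$ is an approximate solution of the Schur system $\tilde{\bmat{S}}_{(C)}\bvec{q} = \bvec{b}_{F} - \bmat{A}_{FC}\bmat{D}_{CC}^{-1}\bvec{b}_{C}$ with residual $\bvec{r}_{\tilde{S}}$. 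The first step is therefore simply to substitute this $\bvec{x}_{s}$ into~\eqref{eqn:residual}.

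The second step is the block algebra. For the $F$-component, substitution gives $\bvec{b}_{F} - \bmat{A}_{FF}\bvec{q} - \bmat{A}_{FC}\bmat{D}_{CC}^{-1}\bvec{b}_{C} + \bmat{A}_{FC}\bmat{D}_{CC}^{-1}\bmat{A}_{CF}\bvec{q}$, and collecting the $\bvec{q}$-terms via the definition $\tilde{\bmat{S}}_{(C)} = \bmat{A}_{FF} - \bmat{A}_{FC}\bmat{D}_{CC}^{-1}\bmat{A}_{CF}$ shows this equals exactly $\bvec{r}_{\tilde{S}} = \bvec{b}_{F} - \bmat{A}_{FC}\bmat{D}_{CC}^{-1}\bvec{b}_{C} - \tilde{\bmat{S}}_{(C)}\bvec{q}$. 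For the $C$-component, the key identity is $\bmat{A}_{CF}\bmat{A}_{FF}^{-1}\tilde{\bmat{S}}_{(C)} = \bmat{A}_{CF} - \bmat{A}_{CF}\bmat{A}_{FF}^{-1}\bmat{A}_{FC}\bmat{D}_{CC}^{-1}\bmat{A}_{CF}$, again immediate from the definition of $\tilde{\bmat{S}}_{(C)}$; using it one verifies that the $C$-component of~\eqref{eqn:residual} collapses to $\bmat{A}_{CF}\bmat{A}_{FF}^{-1}\bvec{r}_{\tilde{S}}$. Hence $\bvec{r} = [\bmat{A}_{CF}\bmat{A}_{FF}^{-1}\bvec{r}_{\tilde{S}}, \bvec{r}_{\tilde{S}}]^{\top}$, exactly as in the derivation preceding the statement.

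The final step is the norm estimate: by the triangle inequality on the block norm and submultiplicativity, $\|\bvec{r}\| \le \|\bmat{A}_{CF}\bmat{A}_{FF}^{-1}\bvec{r}_{\tilde{S}}\| + \|\bvec{r}_{\tilde{S}}\| \le \bigl(1 + \|\bmat{A}_{CF}\bmat{A}_{FF}^{-1}\|\bigr)\|\bvec{r}_{\tilde{S}}\|$, and the hypothesis $\|\bvec{r}_{\tilde{S}}\| < \varepsilon$ gives $\|\bvec{r}\| \le C\varepsilon$ with $C = 1 + \|\bmat{A}_{CF}\bmat{A}_{FF}^{-1}\|$. I do not expect a genuine obstacle, since the argument is a verification rather than a construction; the one place that needs care is making the two cancellations in the $C$-block — the bare $\bmat{A}_{CF}\bvec{q}$ term and the $\bmat{A}_{FF}^{-1}\bmat{A}_{FC}\bmat{D}_{CC}^{-1}\bmat{A}_{CF}\bvec{q}$ cross-term — line up with the expansion of $\bmat{A}_{CF}\bmat{A}_{FF}^{-1}\tilde{\bmat{S}}_{(C)}\bvec{q}$. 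It is also worth stressing that the constant $C$ depends only on $\bmat{A}_{CF}\bmat{A}_{FF}^{-1}$ and is independent of $\bvec{q}$ and of the accuracy to which the approximate Schur subsystem is solved, so that $\varepsilon \to 0$ forces $\|\bvec{r}\| \to 0$, in direct analogy with Proposition~\ref{prop:F-relaxtion-residual}.
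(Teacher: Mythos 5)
Your proposal is correct and follows essentially the same route as the paper: substitute the sSIMPLE smoothed vector~\eqref{eqn:x-sSIMPLE} into the residual formula~\eqref{eqn:residual} of Proposition~\ref{prop:strategy-one}, use the definition of $\tilde{\bmat{S}}_{(C)}$ to collapse both blocks to $\bmat{A}_{CF}\bmat{A}_{FF}^{-1}\bvec{r}_{\tilde{S}}$ and $\bvec{r}_{\tilde{S}}$, and finish with the triangle inequality and submultiplicativity. The block cancellations you flag as the delicate step are exactly the ones carried out in the display preceding the proposition in the paper, so there is no gap.
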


From Proposition~\ref{prop:M-sSIMPLE-residual}, it is evident that sSIMPLE smoothing can match the ideal interpolation operator and serves as an effective smoother. The higher the accuracy in solving the subsystem involving $\tilde{\bmat{S}}_{(C)}$, the faster the convergence of the two-level method. In practice, methods such as incomplete LU decomposition can be used to solve the subsystem $\tilde{\bmat{S}}_{(C)} \bvec{z} = \bvec{r}$.

In summary, for the case of the ideal restriction operator $\hat{\mathcal{R}}$ and ideal interpolation operator $\hat{\mathcal{P}}$ defined by Equations~\eqref{eqn:ideal-R} and \eqref{eqn:ideal-P}, we have the following conclusions:
\begin{enumerate}
    \item[(1)] When smoothing is performed only on the fine points, and $\| \bvec{b}_{F} - \bmat{A}_{FF} \bvec{x}_{s,F} \| < \varepsilon$:
    \begin{enumerate}
        \item[~a.~] If the coarse grid system is solved exactly, then after one iteration, the residual of the original system satisfies
        $$\| \bvec{r} \| \leq  C \varepsilon, \quad C = 1 + \| \bmat{A}_{CF} \bmat{A}_{FF}^{-1} \|,$$ 
        as shown in Proposition~\ref{prop:F-relaxtion-residual}.
    \end{enumerate}
    \item[(2)] When exact smoothing is performed only on the fine points, with the smoother $\mathcal{B}_{F}$ defined as in Equation~\eqref{eqn:F-relaxtion}:
    \begin{enumerate}
        \item[~a.~] If the coarse grid system is solved exactly, then the two-level method becomes a direct method, as shown in Proposition~\ref{prop:F-relaxtion-direct}.
        \item[~b.~] If AMG is used to solve the coarse grid system, then the eigenvalue distribution of the two-level method $\sigma \left(\mathcal{M}^{-1} \mathcal{A} \right)$ is as described in Theorem~\ref{theorem:M-F-AMG}.
    \end{enumerate}
    \item[(3)] When the sSIMPLE method is used for pre-smoothing, and $\| \bvec{b}_{F} - \bmat{A}_{FC} \bmat{D}_{CC}^{-1} \bvec{b}_{C} - \tilde{\bmat{S}}_{(C)} \bvec{q}  \| < \varepsilon$:
    \begin{enumerate}
        \item[~a.~] If the coarse grid system is solved exactly, then after one iteration, the residual of the original system satisfies $\| \bvec{r} \| \leq  C \varepsilon, \quad C = 1 + \| \bmat{A}_{CF} \bmat{A}_{FF}^{-1} \|$, as shown in Proposition~\ref{prop:M-sSIMPLE-residual}.
    \end{enumerate}
\end{enumerate}

\subsection{Simplified interpolation and approximate interpolation}
\label{subsect:simplified-and-approx}

\subsubsection{Simplified interpolation}
In the previous sections, we have discussed the case of the ideal interpolation operator $\hat{\mathcal{P}}$ as a component of the two-level method. However, the form of the ideal interpolation operator is relatively complex and computationally expensive. Therefore, to improve computational efficiency, it is necessary to simplify it. Based on Equation~\eqref{eqn:ideal-P}, we define the following operator:
\begin{align}\label{eqn:simple-P}
    \tilde{\mathcal{P}} = 
    \left[ \begin{array}{c}
        \bmat{I}_{CC} \\
        \tilde{\bmat{P}}_{FC} 
    \end{array}\right]
    =
    \left[ \begin{array}{c c}
        \bmat{I}_{\mathcal{N} \mathcal{N}} & \mathbf{0} \\
        \mathbf{0} & \bmat{I}_{\mathcal{M} \mathcal{M}} \\
        \mathbf{0} & \bmat{P} \\
        \mathbf{0} & \mathbf{0} 
    \end{array} \right],
\end{align}
which we refer to as the simplified interpolation operator. In essence, the simplified interpolation operator $\tilde{\mathcal{P}}$ is derived by setting the last row block of the ideal interpolation operator $\hat{\mathcal{P}}$ to zero. The corresponding coarse grid operator is then given by:
\begin{align}
    \mathcal{A}_{H} = \hat{\mathcal{R}} \mathcal{A} \tilde{\mathcal{P}}.
\end{align}
Notably, since $\hat{\mathcal{R}} \mathcal{A} \tilde{\mathcal{P}} = \bmat{S} = \hat{\mathcal{R}} \mathcal{A} \hat{\mathcal{P}}$, the coarse grid operator constructed using the simplified interpolation operator is the same as the one constructed using the ideal interpolation operator, both being the Schur complement matrix.

In the following lemmas and theorems, we will discuss the simplified interpolation operator to reveal its effectiveness in the two-level method.

\begin{lemmaNoParens}[~\cite{wiesner2014multigrid}]\label{lemma:A-E-block}
    Let the interpolation and restriction operators be defined as follows:
    \begin{align}
    \mathcal{P} = \left[ \begin{array}{c}
        \bmat{I}_{CC} \\
        \bmat{P}_{FC} 
    \end{array}\right],
    \quad
    \mathcal{R} = \left[ \begin{array}{cc}
        \bmat{I}_{CC} & \bmat{R}_{CF} 
    \end{array}\right],
    \end{align}
    then the matrix $\mathcal{A}$ can be represented as
    \begin{align}\label{eqn:A-E-LDU}
        \mathcal{A} = 
        \left[ \begin{array}{c c}
            \bmat{A}_{CC} & \bmat{A}_{CF} \\
            \bmat{A}_{FC} & \bmat{A}_{FF}
        \end{array} \right] 
        =
        \left[ \begin{array}{c c}
            \bmat{I}   & -\bmat{R}_{CF} \\
            \mathbf{0} & \bmat{I}
        \end{array} \right] 
        \left[ \begin{array}{c c}
            \bmat{A}_{H}  & \bmat{E}_{CF} \\
            \bmat{E}_{FC} & \bmat{A}_{FF}
        \end{array} \right] 
        \left[ \begin{array}{c c}
            \bmat{I}        & \mathbf{0} \\
            -\bmat{P}_{FC}  & \bmat{I}
        \end{array} \right],      
    \end{align}
    where $\bmat{A}_{H} = \mathcal{R} \mathcal{A} \mathcal{P}$, $\bmat{E}_{FC} = \left[ \bmat{A}_{FC} \quad \bmat{A}_{FF}\right] \mathcal{P}$, and $\bmat{E}_{CF} = \mathcal{R} \left[ \begin{array}{c}
        \bmat{A}_{CF} \\
        \bmat{A}_{FF}
    \end{array}\right].$
\end{lemmaNoParens}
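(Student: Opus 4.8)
The plan is to verify the claimed factorization by a direct block computation. Since the two outer factors $\begin{bmatrix}\bmat{I} & -\bmat{R}_{CF}\\ \mathbf{0} & \bmat{I}\end{bmatrix}$ and $\begin{bmatrix}\bmat{I} & \mathbf{0}\\ -\bmat{P}_{FC} & \bmat{I}\end{bmatrix}$ are unit triangular, hence invertible, the assertion is equivalent to showing that
\[
\bmat{B}\;:=\;\begin{bmatrix}\bmat{I} & \bmat{R}_{CF}\\ \mathbf{0} & \bmat{I}\end{bmatrix}\,\mathcal{A}\,\begin{bmatrix}\bmat{I} & \mathbf{0}\\ \bmat{P}_{FC} & \bmat{I}\end{bmatrix}\;=\;\begin{bmatrix}\bmat{A}_{H} & \bmat{E}_{CF}\\ \bmat{E}_{FC} & \bmat{A}_{FF}\end{bmatrix},
\]
after which multiplying back on the left and right by the original (inverse) unit-triangular factors gives~\eqref{eqn:A-E-LDU}. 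I would compute $\bmat{B}$ in two stages, always keeping the $(C,F)$ block ordering so that $\mathcal{P}=[\bmat{I}_{CC};\,\bmat{P}_{FC}]$ and $\mathcal{R}=[\bmat{I}_{CC}\ \ \bmat{R}_{CF}]$ slot in directly as the relevant off-diagonal blocks.

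First, right-multiplying $\mathcal{A}$ by the unit lower-triangular factor produces a matrix whose $F$-block row is $[\,\bmat{A}_{FC}+\bmat{A}_{FF}\bmat{P}_{FC}\ \ \bmat{A}_{FF}\,]$; the left entry is exactly $[\bmat{A}_{FC}\ \bmat{A}_{FF}]\mathcal{P}=\bmat{E}_{FC}$ and the $(F,F)$ entry $\bmat{A}_{FF}$ is untouched. Left-multiplying the result by the unit upper-triangular factor leaves that whole $F$-block row unchanged and turns the $(C,F)$ entry into $\bmat{A}_{CF}+\bmat{R}_{CF}\bmat{A}_{FF}=\mathcal{R}\,[\bmat{A}_{CF};\,\bmat{A}_{FF}]=\bmat{E}_{CF}$. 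The only block needing a short argument is the $(C,C)$ entry, which comes out as $\bmat{A}_{CC}+\bmat{A}_{CF}\bmat{P}_{FC}+\bmat{R}_{CF}\bmat{E}_{FC}$; expanding $\mathcal{A}\mathcal{P}=[\bmat{A}_{CC}+\bmat{A}_{CF}\bmat{P}_{FC};\,\bmat{E}_{FC}]$ and then applying $\mathcal{R}$ shows this is precisely $\mathcal{R}\mathcal{A}\mathcal{P}=\bmat{A}_{H}$. Collecting the four blocks gives $\bmat{B}$ in the asserted form, and conjugating back finishes the proof.

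There is no genuine obstacle here: the lemma is a bookkeeping identity and every step is a $2\times 2$ block product. The only points to handle with care are (i) the consistency of the $(C,F)$ ordering, so that $\mathcal{P}$ and $\mathcal{R}$ really do appear as the off-diagonal blocks of the triangular factors (up to the sign flip), and (ii) the observation that the unit-triangular conjugation modifies neither the $F$-block row nor the $F$-block column of the middle matrix — this is exactly why the $(F,F)$ entry stays $\bmat{A}_{FF}$ with no Schur-type correction, i.e.\ why the factorization is \emph{exact} rather than a similarity up to lower-order terms. As a sanity check I would note that taking $\bmat{R}_{CF}=-\bmat{A}_{CF}\bmat{A}_{FF}^{-1}$ and $\bmat{P}_{FC}=-\bmat{A}_{FF}^{-1}\bmat{A}_{FC}$ makes $\bmat{E}_{CF}=\bmat{E}_{FC}=\mathbf{0}$, recovering the exact block-$LDU$ factorization~\eqref{eqn:A-block-LDU} with $\bmat{A}_{H}=\bmat{S}$.
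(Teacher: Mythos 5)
Your proof is correct and is essentially the same direct block-multiplication argument as the paper's, just run in the opposite direction: you compute $L^{-1}\mathcal{A}U^{-1}$ and identify each block with $\bmat{A}_{H}$, $\bmat{E}_{CF}$, $\bmat{E}_{FC}$, $\bmat{A}_{FF}$, while the paper expands $\bmat{A}_{H}$, $\bmat{E}_{CF}$, $\bmat{E}_{FC}$ and verifies that $L\bmat{M}U$ reproduces $\mathcal{A}$. Both are the same bookkeeping, and your closing sanity check recovering~\eqref{eqn:A-block-LDU} is a nice touch but not needed.
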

\begin{proof}
    By the definitions of the matrices, $\bmat{A}_{H}, \bmat{E}_{FC}$, and $\bmat{E}_{CF}$ are given by:
    \begin{align}
        \bmat{A}_{H} &= \mathcal{R} \mathcal{A} \mathcal{P} = 
        \bmat{A}_{CC} + \bmat{R}_{CF}\bmat{A}_{FC} + \bmat{A}_{CF}\bmat{P}_{FC} +
        \bmat{R}_{CF}\bmat{A}_{FF}\bmat{P}_{FC}, \label{eqn:E-AH}\\
        \bmat{E}_{FC} &= \left[ \bmat{A}_{FC} \quad \bmat{A}_{FF}\right] \mathcal{P} =
        \bmat{A}_{FC} + \bmat{A}_{FF}\bmat{P}_{FC}, \label{eqn:E-FC}\\
        \bmat{E}_{CF} &= \mathcal{R} \left[ \begin{array}{c}
        \bmat{A}_{CF} \\
        \bmat{A}_{FF}
    \end{array}\right] = \bmat{A}_{CF} + \bmat{R}_{CF}\bmat{A}_{FF},\label{eqn:E-CF}
    \end{align}
    Substituting Equations~\eqref{eqn:E-AH}~-~\eqref{eqn:E-CF} into Equation~\eqref{eqn:A-E-LDU}, we obtain
    \begin{align*}
        \left[ \begin{array}{c c}
            \bmat{I}   & -\bmat{R}_{CF} \\
            \mathbf{0} & \bmat{I}
        \end{array} \right] 
        \left[ \begin{array}{c c}
            \bmat{A}_{H}  & \bmat{E}_{CF} \\
            \bmat{E}_{FC} & \bmat{A}_{FF}
        \end{array} \right] 
        \left[ \begin{array}{c c}
            \bmat{I}        & \mathbf{0} \\
            -\bmat{P}_{FC}  & \bmat{I}
        \end{array} \right] 
        =
        \left[ \begin{array}{c c}
            \bmat{A}_{CC} & \bmat{A}_{CF} \\
            \bmat{A}_{FC} & \bmat{A}_{FF}
        \end{array} \right].      
    \end{align*}
\end{proof}

This lemma demonstrates that even if the interpolation and restriction operators are not ideal, by introducing the error matrices $\bmat{E}_{FC}$ and $\bmat{E}_{CF}$, we can still represent the matrix $\mathcal{A}$ in block form.

\begin{theorem}\label{theorem:tilde-P-AMG}
    Let the interpolation operator $\tilde{\mathcal{P}}$ and the restriction operator $\hat{\mathcal{R}}$ be defined as in Equations~\eqref{eqn:simple-P} and~\eqref{eqn:ideal-R}, respectively:
    $$\tilde{\mathcal{P}} = 
    \left[ \begin{array}{c}
        \bmat{I}_{CC} \\
        \tilde{\bmat{P}}_{FC} 
    \end{array}\right], \quad
    \hat{\mathcal{R}}=
    \left[ \begin{array}{c c}
        \bmat{I}_{CC}  & \hat{\bmat{R}}_{CF}  
    \end{array} \right],$$
    and define the coarse grid operator as $\mathcal{A}_{H} = \hat{\mathcal{R}} \mathcal{A} \tilde{\mathcal{P}}$. The smoother is defined as $\mathcal{B}_{F}^{-1} = \mathcal{Q} (\mathcal{Q}^{\top} \mathcal{A}\mathcal{Q})^{-1} \mathcal{Q}^{\top}$. Using AMG to solve the coarse grid system $\mathcal{A}_{H} \bvec{e}_{H} = \bvec{f}_{H}$, the associated operator is denoted as $\mathcal{G}_{H}^{-1}$. Define
    \begin{align}\label{eqn:M-simple-P-block-LDU}
        \mathcal{M} 
        =
        \left[ \begin{array}{c c}
            \bmat{I}   & -\hat{\bmat{R}}_{CF}   \\
            \mathbf{0} & \bmat{I}
        \end{array} \right] 
        \left[ \begin{array}{c c}
            \mathcal{G}_{H} & \mathbf{0} \\
            \mathbf{0}      & \bmat{A}_{FF}
        \end{array} \right] 
        \left[ \begin{array}{c c}
            \bmat{I}                & \mathbf{0} \\
            -\tilde{\bmat{P}}_{FC}  & \bmat{I}
        \end{array} \right],
    \end{align}
    then the iteration matrix of the two-level method is
    \begin{align}\label{eqn:M-simple-P-iter}
        \mathcal{I} - \mathcal{M}^{-1} \mathcal{A} 
        = \left(\mathcal{I} - \tilde{\mathcal{P}} \mathcal{G}_{H}^{-1} \hat{\mathcal{R}} \mathcal{A} \right) \left(\mathcal{I} -\mathcal{B}_{F}^{-1}  \mathcal{A} \right),
    \end{align}   
    and the eigenvalue distribution of $\mathcal{M}^{-1} \mathcal{A}$ is
    \begin{align}\label{eqn:M-simple-P-eigenvalue}
        \sigma\left( \mathcal{M}^{-1} \mathcal{A} \right) =  \left\{ 1 \right\} \cup  \left\{ \sigma_{j}: \; \sigma_{j} \in \sigma\left( \mathcal{G}_{H}^{-1} \bmat{S} \right) \right\},
    \end{align}
    where $\sigma(\bmat{K})$ denotes the spectrum of the matrix $\bmat{K}$. 
\end{theorem}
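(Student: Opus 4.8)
The plan is to reuse the two ingredients already established for Theorem~\ref{theorem:M-F-AMG}, adjusting for the fact that the simplified interpolation $\tilde{\mathcal{P}}$ in~\eqref{eqn:simple-P} is no longer ideal. First I would expand $\mathcal{M}^{-1}$ from the block factorization~\eqref{eqn:M-simple-P-block-LDU} exactly as in the proof of Lemma~\ref{lemma:add-equal-multi}: the unit lower-triangular factor equals $[\,\tilde{\mathcal{P}}\ \ \mathcal{Q}\,]$ and the unit upper-triangular factor equals $[\,\hat{\mathcal{R}}\,;\,\mathcal{Q}^{\top}\,]$ with $\mathcal{Q}$ as in~\eqref{eqn:Q}, so that
\[
  \mathcal{M}^{-1} = \tilde{\mathcal{P}}\,\mathcal{G}_{H}^{-1}\,\hat{\mathcal{R}} + \mathcal{Q}\,\bmat{A}_{FF}^{-1}\,\mathcal{Q}^{\top}
  = \tilde{\mathcal{P}}\,\mathcal{G}_{H}^{-1}\,\hat{\mathcal{R}} + \mathcal{B}_{F}^{-1},
\]
and hence $\mathcal{I}-\mathcal{M}^{-1}\mathcal{A} = \mathcal{I} - \tilde{\mathcal{P}}\mathcal{G}_{H}^{-1}\hat{\mathcal{R}}\mathcal{A} - \mathcal{B}_{F}^{-1}\mathcal{A}$, the additive two-level form.

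To turn this into the multiplicative identity~\eqref{eqn:M-simple-P-iter}, I would exploit that $\hat{\mathcal{R}}$ is the \emph{ideal} restriction: multiplying out gives $\hat{\mathcal{R}}\mathcal{A} = [\,\bmat{S}\ \ \mathbf{0}\,]$, hence $\hat{\mathcal{R}}\mathcal{A}\mathcal{Q} = \mathbf{0}$ and therefore $\hat{\mathcal{R}}\mathcal{A}\,\mathcal{B}_{F}^{-1}\mathcal{A} = \hat{\mathcal{R}}\mathcal{A}\mathcal{Q}(\mathcal{Q}^{\top}\mathcal{A}\mathcal{Q})^{-1}\mathcal{Q}^{\top}\mathcal{A} = \mathbf{0}$. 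Expanding $(\mathcal{I}-\tilde{\mathcal{P}}\mathcal{G}_{H}^{-1}\hat{\mathcal{R}}\mathcal{A})(\mathcal{I}-\mathcal{B}_{F}^{-1}\mathcal{A})$, the only cross term is $\tilde{\mathcal{P}}\mathcal{G}_{H}^{-1}(\hat{\mathcal{R}}\mathcal{A}\,\mathcal{B}_{F}^{-1}\mathcal{A})$, which vanishes, so the product equals $\mathcal{I}-\tilde{\mathcal{P}}\mathcal{G}_{H}^{-1}\hat{\mathcal{R}}\mathcal{A} - \mathcal{B}_{F}^{-1}\mathcal{A} = \mathcal{I}-\mathcal{M}^{-1}\mathcal{A}$. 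I would remark that, unlike in Theorem~\ref{theorem:M-F-AMG}, the reversed (post-smoothed) ordering is not available here, since that would require $\mathcal{Q}^{\top}\mathcal{A}\tilde{\mathcal{P}} = \mathbf{0}$, whereas dropping the last block of the ideal interpolation leaves $\mathcal{Q}^{\top}\mathcal{A}\tilde{\mathcal{P}} = \bmat{A}_{FC}+\bmat{A}_{FF}\tilde{\bmat{P}}_{FC}\neq\mathbf{0}$ in general.

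For the spectrum~\eqref{eqn:M-simple-P-eigenvalue}, I would apply Lemma~\ref{lemma:A-E-block} with $\mathcal{P}=\tilde{\mathcal{P}}$ and $\mathcal{R}=\hat{\mathcal{R}}$. There $\bmat{A}_{H}=\hat{\mathcal{R}}\mathcal{A}\tilde{\mathcal{P}}=\bmat{S}$ (the simplification does not change the coarse operator, because the dropped block is annihilated by $[\,\bmat{S}\ \ \mathbf{0}\,]$), $\bmat{E}_{FC}=\bmat{A}_{FC}+\bmat{A}_{FF}\tilde{\bmat{P}}_{FC}$, and — the decisive point — $\bmat{E}_{CF}=\bmat{A}_{CF}+\hat{\bmat{R}}_{CF}\bmat{A}_{FF}=\bmat{A}_{CF}-\bmat{A}_{CF}\bmat{A}_{FF}^{-1}\bmat{A}_{FF}=\mathbf{0}$, since $\hat{\bmat{R}}_{CF}=-\bmat{A}_{CF}\bmat{A}_{FF}^{-1}$. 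Thus~\eqref{eqn:A-E-LDU} presents $\mathcal{A}$ as a unit upper-triangular factor times the block \emph{lower}-triangular matrix with diagonal blocks $\bmat{S},\bmat{A}_{FF}$ and sub-diagonal block $\bmat{E}_{FC}$, times a unit lower-triangular factor; these outer factors coincide with the ones in~\eqref{eqn:M-simple-P-block-LDU} for $\mathcal{M}$. Substituting both into $\mathcal{M}^{-1}\mathcal{A}$, the outer unit-triangular factors cancel, leaving $\mathcal{M}^{-1}\mathcal{A}$ similar (by the unit lower-triangular conjugation generated by $\tilde{\bmat{P}}_{FC}$) to the block lower-triangular matrix with diagonal blocks $\mathcal{G}_{H}^{-1}\bmat{S}$ and $\bmat{I}_{FF}$ and sub-diagonal block $\bmat{A}_{FF}^{-1}\bmat{E}_{FC}$, whose spectrum is $\{1\}\cup\sigma(\mathcal{G}_{H}^{-1}\bmat{S})$.

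The computations are all routine block algebra; the one thing to get right is that among the two error blocks of Lemma~\ref{lemma:A-E-block} it is precisely $\bmat{E}_{CF}$ that must vanish, and that it does so automatically for the ideal restriction, while the surviving block $\bmat{E}_{FC}$ (equivalently the non-vanishing of $\mathcal{Q}^{\top}\mathcal{A}\tilde{\mathcal{P}}$) is harmless for the spectral claim because it sits strictly below the diagonal of the triangular middle factor. I expect the bookkeeping of the three cancelling unit-triangular factors, rather than any conceptual difficulty, to be the most error-prone step.
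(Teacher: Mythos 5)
Your proposal is correct and follows essentially the same route as the paper: the multiplicative identity \eqref{eqn:M-simple-P-iter} via Lemma~\ref{lemma:add-equal-multi} using $\hat{\mathcal{R}}\mathcal{A}\mathcal{Q}=\mathbf{0}$, and the spectral claim via Lemma~\ref{lemma:A-E-block} with $\bmat{E}_{CF}=\mathbf{0}$ and cancellation of the unit-triangular factors, leaving $\mathcal{M}^{-1}\mathcal{A}$ similar to a block lower-triangular matrix with diagonal blocks $\mathcal{G}_{H}^{-1}\bmat{S}$ and $\bmat{I}$. Your added observation that the post-smoothed ordering fails because $\mathcal{Q}^{\top}\mathcal{A}\tilde{\mathcal{P}}\neq\mathbf{0}$ is a correct refinement consistent with the theorem stating only the pre-smoothed form.
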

\begin{proof}
    Notice that $\hat{\mathcal{R}} \mathcal{A} \mathcal{Q} = \mathbf{0}$ holds, and by Lemma~\ref{lemma:add-equal-multi}, Equation~\eqref{eqn:M-simple-P-iter} follows. Furthermore, by Lemma~\ref{lemma:A-E-block}, the matrix $\mathcal{A}$ can be expressed as:
    \begin{align} \label{eqn:A-E-simple-P-block}
        \mathcal{A} = 
        \left[ \begin{array}{c c}
            \bmat{A}_{CC} & \bmat{A}_{CF} \\
            \bmat{A}_{FC} & \bmat{A}_{FF}
        \end{array} \right] 
        =
        \left[ \begin{array}{c c}
            \bmat{I}   & -\hat{\bmat{R}}_{CF} \\
            \mathbf{0} & \bmat{I}
        \end{array} \right] 
        \left[ \begin{array}{c c}
            \mathcal{A}_{H}  & \bmat{E}_{CF} \\
            \bmat{E}_{FC}    & \bmat{A}_{FF}
        \end{array} \right] 
        \left[ \begin{array}{c c}
            \bmat{I}        & \mathbf{0} \\
            -\tilde{\bmat{P}}_{FC}  & \bmat{I}
        \end{array} \right],
    \end{align}
    where $\mathcal{A}_{H} = \hat{\mathcal{R}} \mathcal{A} \tilde{\mathcal{P}} = \bmat{S}$, $\bmat{E}_{FC} = \bmat{A}_{FC} + \bmat{A}_{FF} \tilde{\bmat{P}}_{FC}$, and $\bmat{E}_{CF} = \bmat{A}_{CF} + \hat{\bmat{R}}_{CF} \bmat{A}_{FF} = \mathbf{0}$.
    Combining Equations~\eqref{eqn:M-simple-P-block-LDU} and~\eqref{eqn:A-E-simple-P-block}, we obtain
    \begin{align*}
        \mathcal{M}^{-1} \mathcal{A} = &
        \left[ \begin{array}{c c}
            \bmat{I}                & \mathbf{0} \\
            -\tilde{\bmat{P}}_{FC}  & \bmat{I}
        \end{array} \right]^{-1}
        \left[ \begin{array}{c c}
            \mathcal{G}_{H}^{-1}    & \mathbf{0} \\
            \mathbf{0}              & \bmat{A}_{FF}^{-1}
        \end{array} \right] 
        \left[ \begin{array}{c c}
            \bmat{S}         & \mathbf{0} \\
            \bmat{E}_{FC}    & \bmat{A}_{FF}
        \end{array} \right] 
        \left[ \begin{array}{c c}
            \bmat{I}                & \mathbf{0} \\
            -\tilde{\bmat{P}}_{FC}  & \bmat{I}
        \end{array} \right]    \\
         = &
        \left[ \begin{array}{c c}
            \bmat{I}                & \mathbf{0} \\
            -\tilde{\bmat{P}}_{FC}  & \bmat{I}
        \end{array} \right]^{-1}
        \left[ \begin{array}{c c}
            \mathcal{G}_{H}^{-1} \bmat{S}      & \mathbf{0} \\
            \bmat{A}_{FF}^{-1}\bmat{E}_{FC}    & \bmat{I} 
        \end{array} \right] 
        \left[ \begin{array}{c c}
            \bmat{I}                & \mathbf{0} \\
            -\tilde{\bmat{P}}_{FC}  & \bmat{I}
        \end{array} \right],
    \end{align*}
    meaning that the matrix $\mathcal{M}^{-1} \mathcal{A}$ is similar to
    $$
        \left[ \begin{array}{c c}
            \mathcal{G}_{H}^{-1} \bmat{S}      & \mathbf{0} \\
            \bmat{A}_{FF}^{-1}\bmat{E}_{FC}    & \bmat{I} 
        \end{array} \right],
    $$
    hence they share the same eigenvalues. The eigenvalues of the latter can be expressed as
    $$\left\{ 1 \right\} \cup  \left\{ \sigma_{j}: \; \sigma_{j} \in \sigma\left( \mathcal{G}_{H}^{-1} \bmat{S} \right) \right\}, $$
    the theorem is proved. 
\end{proof}

\subsubsection{Approximate interpolation}

We have defined two types of interpolation operators: the ideal interpolation operator $\hat{\mathcal{P}}$ and the simplified interpolation operator $\tilde{\mathcal{P}}$, as shown in Equations~\eqref{eqn:ideal-P} and~\eqref{eqn:simple-P}, respectively. The coarse grid operator, in both cases, is the Schur complement matrix, i.e.,
\begin{align} 
    {\mathcal{A}}_{H}
    =
    \left[ \begin{array}{c c}
	\bmat{K}_{\mathcal{N} \mathcal{N}} & \bmat{K}_{\mathcal{N} \mathcal{M}} + \bmat{K}_{\mathcal{N} \mathcal{S}} \bmat{P} \\
	\bmat{K}_{\mathcal{M} \mathcal{N}} + \bmat{P}^{\top} \bmat{K}_{\mathcal{S} \mathcal{N}} & \bmat{K}_{\mathcal{M} \mathcal{M}} + \bmat{P}^{\top} \bmat{K}_{\mathcal{S} \mathcal{S}} \bmat{P}
    \end{array} \right].
\end{align}
It should be noted that both the interpolation operator and the coarse grid matrix contain the submatrix $\bmat{P}$, where $\bmat{P} = \bmat{D}^{-1} \bmat{M}$. Due to the inversion of the mortar matrix $\bmat{D}^{-1}$ in the calculation of $\bmat{P}$, the matrix $\bmat{P}$ contains many non-zero elements. This results in certain subblocks of the interpolation matrix and the coarse grid operator having a high density of non-zero elements, which negatively impacts computational efficiency. To ensure sparsity, we define an approximate matrix $\bmat{P}^{d}$ as follows:
\begin{align}
    \bmat{P}^{d}_{ij} = \left\{ \begin{array}{cl}
        \bmat{P}_{ij} & \text{if} \, |\bmat{P}_{ij}| > \varepsilon \\
        0             & \text{otherwise}
    \end{array} \right.,
\end{align}
where $\varepsilon > 0$ is a small, user-defined parameter (e.g., $\varepsilon = 10^{-10}$). This means that in constructing $\bmat{P}^{d}$, we discard the elements of $\bmat{P}$ with absolute values smaller than $\varepsilon$. Based on $\bmat{P}^{d}$, we define the approximate ideal interpolation operator $\hat{\mathcal{P}}^{d}$ and the approximate simplified interpolation operator $\tilde{\mathcal{P}}^{d}$ as follows:
\begin{align}
    \hat{\mathcal{P}}^{d}
    &=
    \left[ \begin{array}{c}
        \bmat{I}_{CC} \\
        \hat{\bmat{P}}_{FC}^{d} 
    \end{array}\right]
    =
    \left[ \begin{array}{c c}
        \bmat{I}_{\mathcal{N} \mathcal{N}} & \mathbf{0} \\
        \mathbf{0} & \bmat{I}_{\mathcal{M} \mathcal{M}} \\
        \mathbf{0} & \bmat{P}^{d} \\
        -\bmat{D}^{-\top} \bmat{K}_{\mathcal{S} \mathcal{N}} & -\bmat{D}^{-\top} \bmat{K}_{\mathcal{S} \mathcal{S}} \bmat{P}^{d}
    \end{array} \right],  \label{eqn:drop-ideal-P}\\
    \tilde{\mathcal{P}}^{d}
    &=
    \left[ \begin{array}{c}
        \bmat{I}_{CC} \\
        \tilde{\bmat{P}}_{FC}^{d} 
    \end{array}\right]
    =
    \left[ \begin{array}{c c}
        \bmat{I}_{\mathcal{N} \mathcal{N}} & \mathbf{0} \\
        \mathbf{0} & \bmat{I}_{\mathcal{M} \mathcal{M}} \\
        \mathbf{0} & \bmat{P}^{d} \\
        \mathbf{0} & \mathbf{0} 
    \end{array} \right],    \label{eqn:drop-simple-P}
\end{align}
as well as the approximate coarse grid operator ${\mathcal{A}}_{H}^{d}$:
\begin{align} 
    {\mathcal{A}}_{H}^{d}
    =
    \left[ \begin{array}{c c}
	\bmat{K}_{\mathcal{N} \mathcal{N}} & \bmat{K}_{\mathcal{N} \mathcal{M}} + \bmat{K}_{\mathcal{N} \mathcal{S}} \bmat{P}^{d} \\
	\bmat{K}_{\mathcal{M} \mathcal{N}} + \left(\bmat{P}^{d}\right)^{\top} \bmat{K}_{\mathcal{S} \mathcal{N}} & \bmat{K}_{\mathcal{M} \mathcal{M}} + \left(\bmat{P}^{d}\right)^{\top} \bmat{K}_{\mathcal{S} \mathcal{S}} \bmat{P}^{d}
    \end{array} \right].
\end{align}

\subsection{Design and Implementation of two-level AMG}
\label{subsect:design}

The definition of the simplified interpolation operator $\tilde{\mathcal{P}}$ is given by Equation~\eqref{eqn:simple-P}. Its simple form makes it easy to implement in practice. Next, we will focus on the implementation of the ideal interpolation operator $\hat{\mathcal{P}}$ and the ideal restriction operator $\hat{\mathcal{R}}$.

In classical AMG methods, the ideal interpolation operator includes the block $\hat{\bmat{P}}_{FC} = -\bmat{A}_{FF}^{-1}\bmat{A}_{FC}$, which involves the inverse of the fine grid operator, leading to high computational costs. Therefore, the ideal interpolation is often not practical in many applications. However, in the two-level method for contact problems, according to the CF-splitting defined by Equation~\eqref{eqn:CF-split}, $\hat{\bmat{P}}_{FC}$ has the following form:
\begin{align}
    \hat{\bmat{P}}_{FC}
    = -\bmat{A}_{FF}^{-1}\bmat{A}_{FC}
    =
    \left[ \begin{array}{c c}
        \mathbf{0} & \bmat{P} \\
        -\bmat{D}^{-\top} \bmat{K}_{\mathcal{S} \mathcal{N}} & -\bmat{D}^{-\top} \bmat{K}_{\mathcal{S} \mathcal{S}} \bmat{P}
    \end{array} \right] .
\end{align}
In the above expression, only the inverse of the mortar matrix $\bmat{D}$ is involved. The matrix $\bmat{D}$ has favorable structural characteristics. In two-dimensional contact problems, $\bmat{D}$ is a block tridiagonal matrix, and efficient inversion methods based on the block Thomas algorithm can be used. Considering that the computation of residual restriction and error interpolation are matrix-vector multiplication operations, it is unnecessary to explicitly construct the ideal transfer operators. Instead, it suffices to represent the mathematical behavior of the matrix-vector multiplication.

\begin{itemize}
    \item[(1)] Residual Restriction Calculation $\hat{\mathcal{R}} \bvec{f}$
    
    For the residual vector $\bvec{f}$ on the fine grid, the restricted residual $\bvec{f}_{H}$ can be expressed as:
    \begin{equation} \label{eqn:restric-residual}
        \bvec{f}_{H}
        =
        \hat{\mathcal{R}} \bvec{f}
        =
        \left[ \begin{array}{c}
            \bvec{f}_{\mathcal{N}} - \bmat{K}_{\mathcal{N} \mathcal{S}}\bmat{D}^{-1} \bvec{f}_\lambda \\
            \bvec{f}_{\mathcal{M}} + \bmat{P}^{\top} \bvec{f}_{\mathcal{S}} - \bmat{P}^\top \bmat{K}_{\mathcal{S} \mathcal{S}}\bmat{D}^{-1} \bvec{f}_\lambda
        \end{array} \right].
    \end{equation}  
    Define the vector $\tilde{\bvec{v}}$
    \begin{align}
        \tilde{\bvec{v}}
        =
        \left[ \begin{array}{c}
            \bvec{0}  \\
            \bvec{0}  \\
            \bmat{D}^{-1} \bvec{f}_\lambda \\
            \bvec{0}
        \end{array} \right], 
    \end{align}
    then it follows that 
    \begin{align}
        \bmat{\mathcal{A}} \tilde{\bvec{v}}
        =
        \left[ \begin{array}{c}
            \bmat{K}_{\mathcal{N} \mathcal{S}} \bmat{D}^{-1} \bvec{f}_\lambda \\
            \bvec{0} \\
            \bmat{K}_{\mathcal{S} \mathcal{S}} \bmat{D}^{-1} \bvec{f}_\lambda \\
            \bvec{f}_\lambda
        \end{array} \right].        
    \end{align}
    During the construction of $\tilde{\bvec{v}}$, only the calculation of $\bmat{D}^{-1} \bvec{f}_\lambda$ is required. After rearrangement, the mortar matrix $\bmat{D}$ can be transformed into a symmetric block tridiagonal matrix $\tilde{\bmat{D}}$, satisfying $\bmat{D} = \tilde{\bmat{D}} \bmat{T}$, where $\bmat{T}$ is a permutation matrix. Thus, 
    \[ \bmat{D}^{-1} \bvec{f}_\lambda = \bmat{T}^{-1} (\tilde{\bmat{D}}^{-1} \bvec{f}_\lambda), \]
    where the calculation of $\tilde{\bmat{D}}^{-1} \bvec{f}_\lambda$ can be efficiently implemented using the block Thomas algorithm.
    
    The residual restriction~\eqref{eqn:restric-residual}~can be expressed as: 
    \begin{align}
        \hat{\mathcal{R}} \bvec{f} = \tilde{\mathcal{R}} \bvec{f} - \tilde{\mathcal{R}} \bmat{\mathcal{A}} \tilde{\bvec{v}} =  \tilde{\mathcal{R}} (\bvec{f} - \bmat{\mathcal{A}} \tilde{\bvec{v}}),
    \end{align}
    where $\tilde{\mathcal{R}} = \tilde{\mathcal{P}}^{\top}$ is the simplified restriction operator. Therefore, the matrix-vector multiplication of the ideal restriction operator is equivalent to the multiplication of the simplified interpolation operator with another vector.
    
    \item[(2)] Interpolation Calculation $\hat{\mathcal{P}} \bvec{e}_{H}$
    
    In strategy one, using the ideal interpolation operator $\hat{\mathcal{P}}$, the interpolated vector can be expressed as:
    \begin{align}\label{eqn:interp-error-one}
        \bvec{e}
        =
        \hat{\mathcal{P}} \bvec{e}_{H}
        =
        \left[ \begin{array}{c}
            \bvec{e}_{H,\mathcal{N}} \\ 
            \bvec{e}_{H,\mathcal{M}} \\
            \bmat{P} \bvec{e}_{H,\mathcal{M}} \\
            -\bmat{D}^{-\top} \bmat{K}_{\mathcal{S} \mathcal{N}} \bvec{e}_{H,\mathcal{N}} - \bmat{D}^{-\top} \bmat{K}_{\mathcal{S} \mathcal{S}} \bmat{P} \bvec{e}_{H,\mathcal{M}}
        \end{array} \right]
    \end{align}
    Define the vector $\tilde{\bvec{e}}_{1} = [\mathbf{0},\; \mathbf{0},\; \mathbf{0} ,\; ( -\bmat{D}^{-\top} \bvec{t}_{1})^{\top} ]^{\top}$, where
    \begin{align}
        \bvec{t}_{1}
        =
        \bmat{K}_{\mathcal{S} \mathcal{N}} \bvec{e}_{H,\mathcal{N}} + \bmat{K}_{\mathcal{S} \mathcal{S}} \bmat{P} \bvec{e}_{H,\mathcal{M}} 
        =
        \left( \mathcal{A} \tilde{\mathcal{P}} \bvec{e}_{H} \right)_{\mathcal{S}},
    \end{align}
    $\tilde{\mathcal{P}}$ is the simplified interpolation operator, and $(\cdot)_{\mathcal{S}}$ indicates the component corresponding to surface nodes.
    Note that the calculation involves $\bmat{D}^{-\top} \bvec{t}_{1}$. Similarly, based on $\bmat{D} = \tilde{\bmat{D}} \bmat{T}$, we have:
    \begin{align}
        \bmat{D}^{-\top}  = (\tilde{\bmat{D}} \bmat{T})^{-\top} = (\bmat{T}^{\top} \tilde{\bmat{D}}^{\top})^{-1} = (\bmat{T}^{-1} \tilde{\bmat{D}})^{-1} = \tilde{\bmat{D}}^{-1} \bmat{T},
    \end{align}
    Therefore, $\bmat{D}^{-\top} \bvec{t}_{1} = \tilde{\bmat{D}}^{-1} \bmat{T} \bvec{t}_{1} = \tilde{\bmat{D}}^{-1} \tilde{\bvec{t}}_{1}$, meaning that $\bmat{D}^{-\top} \bvec{t}_{1}$ can still be calculated using the block Thomas algorithm.

    The above analysis shows that the interpolation calculation $\hat{\mathcal{P}} \bvec{e}_{H}$ can be expressed as:
    \begin{align}
        \hat{\mathcal{P}} \bvec{e}_{H} = \tilde{\mathcal{P}} \bvec{e}_{H} + \tilde{\bvec{e}}_{1} ,
    \end{align}
    In other words, the matrix-vector multiplication of the ideal interpolation operator can be expressed as the matrix-vector multiplication of the simplified interpolation operator plus a vector addition operation.
\end{itemize}

When introducing the smoother operator, we defined the exact smoother for F-points $\mathcal{B}_{F}^{-1}$, which includes the computation of $\bmat{A}_{FF}^{-1}$. Similarly, we discuss the equivalent representation of exact smoothing for F-points.

\begin{itemize}
\item[(3)] Exact Smoothing for F-points $\mathcal{B}_{F}^{-1} \bvec{b}$

In the V-cycle of the two-level method, smoothing is first performed on the fine grid level, resulting in the smoothed vector:
\begin{eqnarray*}
    {\bvec{x}}_{s} = \mathcal{B}_{F}^{-1} \bvec{b} 
    = \bmat{\mathcal{Q}} (\bmat{\mathcal{Q}}^{\top} \bmat{\mathcal{A}} \bmat{\mathcal{Q}} )^{-1} \bmat{\mathcal{Q}}^{\top} \bvec{b} 
    =
    \left[ \begin{array}{c}
        \bvec{0} \\
        \bvec{0} \\
        \bmat{D}^{-1} \bvec{b}_\lambda \\
        \bmat{D}^{-\top} \bvec{b}_{\mathcal{S}} - \bmat{D}^{-\top} \bmat{K}_{\mathcal{S} \mathcal{S}} \bmat{D}^{-1} \bvec{b}_\lambda
    \end{array} \right].
\end{eqnarray*}
Define the vectors $\hat{\bvec{v}}$ and $\hat{\bvec{e}}$ as follows:
\begin{equation*}
    \hat{\bvec{v}}
    =
    \left[ \begin{array}{c}
         \bvec{0}  \\
         \bvec{0}  \\
         \bmat{D}^{-1} \bvec{b}_\lambda \\
         \bvec{0}
    \end{array} \right], 
    \quad
    \hat{\bvec{e}}
    =
    \left[ \begin{array}{c}
        \bvec{0} \\
        \bvec{0} \\
        \bvec{0} \\
        \bmat{D}^{-\top} ( \bvec{b} - \bmat{\mathcal{A}} \hat{\bvec{v}})_{\mathcal{S}}
    \end{array} \right],
\end{equation*} 
The construction of these two vectors only involves matrix-vector multiplications with $\bmat{D}^{-1}$ and $\bmat{D}^{-\top}$. Hence, exact smoothing for F-points can be represented as:
$$ \mathcal{B}_{F}^{-1} \bvec{b} = \hat{\bvec{v}} + \hat{\bvec{e}},$$
which simplifies the implementation process in programming.

\end{itemize}

When using the two-level method as a preconditioner for Krylov subspace methods, the mathematical behavior involves solving the preconditioning equation $\mathcal{M}\bvec{z} = \bvec{r}$, where $\mathcal{M}$ is the preconditioning matrix. In this work, we set $\mathcal{M} = \mathcal{A}$ and perform only one V-cycle. The two-level preconditioning method is detailed in Algorithm~\ref{algorithm:two-level-prec}.

\begin{algorithm}
\caption{Two-Level Preconditioning Algorithm for Solving $\bvec{z} = \bmat{\mathcal{M}}{\text{TLAMG}}^{-1} \bvec{r}$} 
\label{algorithm:two-level-prec}
\begin{algorithmic}[1]
\State Function ; TLAMG($\mathcal{A}, \bvec{r}$)
\State \quad $\bvec{z} \leftarrow \bmat{\mathcal{B}}_{1}^{-1} \bvec{r}$
\Comment{Pre-smoothing}
\State \quad $\bvec{f}_{H} = \mathcal{R} \bvec{f},  \bvec{f} = \left( \bvec{r} - \mathcal{A}\bvec{z} \right)$
\Comment{Residual restriction}
\State \quad $\mathcal{A}_{H_\text{AMG}} \bvec{e}_{H} = \bvec{f}_{H}$
\Comment{Solve coarse grid system using AMG}
\State \quad $\bvec{e} = \mathcal{P} \bvec{e}_{H}$
\Comment{Interpolation}
\State \quad $\bvec{z} \leftarrow \bvec{z} + \bvec{e}$
\Comment{Error correction}
\State \quad $\bvec{z} \leftarrow \bvec{z} + \bmat{\mathcal{B}}_{2}^{-1} \left( \bvec{r} - \mathcal{A}\bvec{z} \right)$
\Comment{Post-smoothing}
\State \quad Return $\bvec{z}$
\State End Function
\end{algorithmic}
\end{algorithm}

This algorithm outlines the steps for implementing the two-level preconditioning method, which includes pre-smoothing, residual restriction, solving the coarse grid system using AMG, interpolation, error correction, and post-smoothing. By following this approach, the preconditioner effectively improves the convergence of the Krylov subspace methods.

\section{Numerical Results}
\label{sect:numer}

In this section, numerical results of the two-level preconditioning method
is given for three contact models. 
We first validate the effectiveness of two-level methods and analyze them from the perspective of algorithmic components, focusing on various smoothers, interpolation, and restriction operators.
Next, we compare the two-level methods with other preconditioning methods. Finally, we show the algorithmic scalability and parallel scalability of the two-level methods.

The numerical experiments are conducted on a cluster, 
which has 28 cores and 96G memory on each node.

\subsection{Test models}
\label{subsect:model}

Three tied contact two-dimensional models are tested.  
The specific test models are given in Fig.~\ref{model},
where the first two models have three contact bodies,
and the last one has two contact bodies.
All the test models are linear elastic, 
and the material parameters include Young's modulus $E$ and Poisson's ratio $\nu$. 
In the tests, we set $E = 20$ N/m$^2$ and $\nu = 0.3$.

For the first and second test models, 
the middle body is the master body, and the rest are slave bodies, and this corresponds to two contact surfaces.
For the third model, the body on the top is 
the master body and on the bottom is the slave body,
and one contact surface is set.

For the first model, the boundary on the left is fixed 
and on the right boundary, 
a constant surface force of $p=10 \text{N}$ in the rightward direction is set.
For the second model, the lower boundaries of the three bodies are fixed, 
and an uniform surface force of $p=10 \text{N}$ in the downward direction
is set on the upper boundaries. 
For the third model, the lower boundary of the slave body is fixed, 
and an uniformly distributed surface force of $p=-1 \text{N}$ in the downward direction
is set on the upper boundary of the master body. 
For all three models, no body force is set.

All the test models are partitioned into triangular meshes, 
and the package FreeFEM++~\cite{FREEFEMhecht2012new} is employed to discretize the tied contact problem.

\begin{figure}[H]
	\centering
	\subfigure[Model 1]{\includegraphics[scale=0.3]{./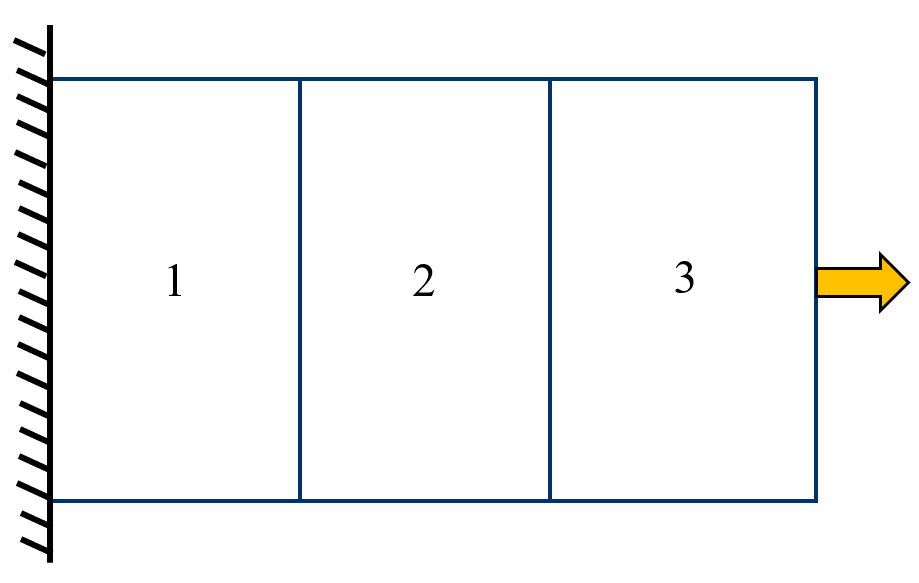} \label{model1}} 
        \subfigure[Model 2]{\includegraphics[scale=0.3]{./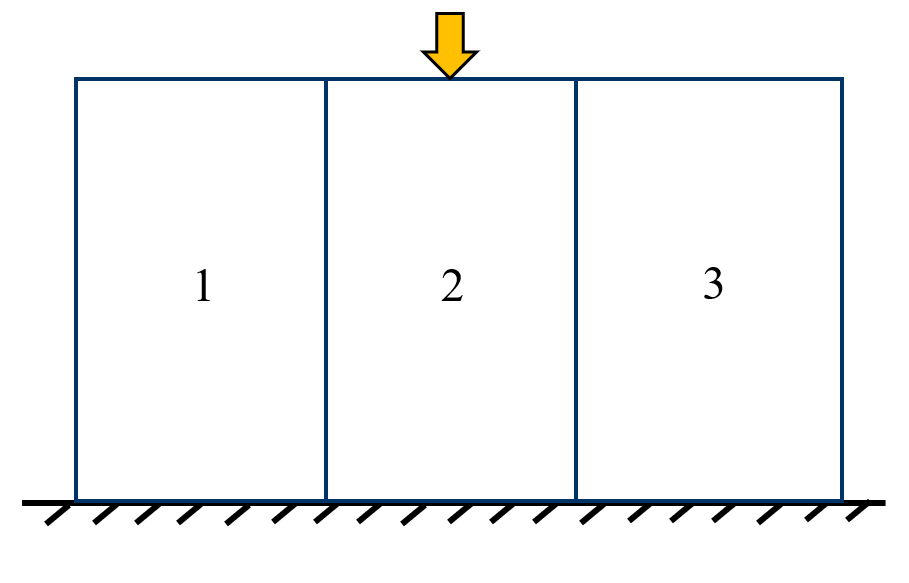} \label{model2}} 
	\subfigure[Model 3]{\includegraphics[scale=0.3]{./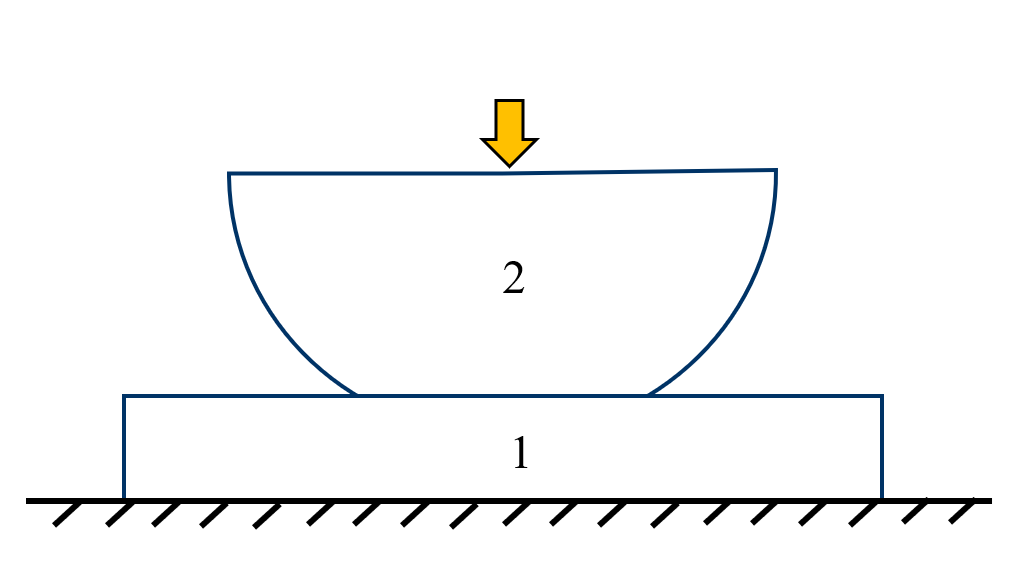} \label{model3}} 
	\caption{Tied contact example models.}
	\label{model}
\end{figure}

\subsection{Experimental setup}
\label{subsect:alg-setup}

The two-level method is implemented
by using the matrix and vector data structures provided by PETSc~\cite{PETSCbalay2017petsc}.
Besides, the linear systems are solved by calling iterative methods and preconditioners provided by PETSc.
PETSc offers several Krylov subspace iterative methods, including CG, GMRES, GCR, and MINRES, 
as well as some preconditioning methods like domain decomposition, 
algebraic multigrid, etc.
Additionally, some third party solver libraries, such as HYPRE~\cite{HYPREfalgout2002hypre},  
MUMPS~\cite{MUMPSamestoy2000mumps} can be called as preconditioning solvers in PETSc.  

For solving saddle point systems arising from contact problems, the GCR method serves as the iterative solver. Following Algorithm~\ref{algorithm:two-level-prec}, we implement a two-level preconditioning method without post-smoothing. In the design of two-level method, the transfer operators (including interpolation operator $\mathcal{P}$ and restriction operator $\mathcal{R}$) are denoted as follows:
\begin{itemize}
    \item $\hat{\mathcal{P}}$: Ideal interpolation;
    \item $\tilde{\mathcal{P}}$: Simplified interpolation;
    \item $\hat{\mathcal{R}}$: Ideal restriction;
    \item $\tilde{\mathcal{R}}$: Simplified restriction;
    \item $(\cdot)^{d}$: Approximate form, e.g., $\hat{\mathcal{P}}^{\text{d}}$ denotes approximate ideal interpolation, with similar notation for others.
\end{itemize}
The smoothing operators $\mathcal{B}$ are denoted as:
\begin{itemize}
    \item $\mathcal{B}_{F}$: Exact fine-point method;
    \item $\mathcal{B}_{s}$: Simplified SIMPLE method (sSIMPLE method);
    \item JAC: Jacobi method.
\end{itemize}
The coarse-grid system is SPD and is solved using the AMG method. Within AMG, a node coarsening strategy based on the matrix generated by the row sum norm of the eliminated matrix is employed~\cite{AMGbaker2016scalability}, and only one V-cycle is performed.

In numerical experiments, the maximum number of iterations is 100, with the convergence criterion defined as the relative residual 2-norm less than $10^{-8}$.  The time for solving the saddle point system is denoted as $\TIME$, measured in seconds (s) unless otherwise specified.  $\NIT$ denotes the number of Krylov iterations, 
and $r_{\text{rel}}^k$ represents the relative residual, that is $r_{\text{rel}}^k = \|\bvec{r}_k\|_2 / |\bvec{r}_0\|_2$.


\subsection{Results of two-level preconditioning}
\label{subsect:xxx}

This subsection first validates the effectiveness of two grid methods by testing three types of contact models. The problem sizes for Model 1 and Model 2 are 639,382, and for Model 3, it is 712,322. All tests were conducted using 28 processes. Table~\ref{tab:result} presents numerical results obtained using ideal interpolation $\hat{\mathcal{P}}$ and simplified interpolation $\tilde{\mathcal{P}}$, with the restriction operators consistently being ideal restriction operator $\hat{\mathcal{R}} = \hat{\mathcal{P}}^{\top}$. Each interpolation operator is paired with three smoothers: Jacobi smoother, exact fine-point smoother $\mathcal{B}_{F}$, and sSIMPLE smoother $\mathcal{B}_{s}$. In the setup of the sSIMPLE method, the subsystem addressing coarse points in the left-upper block employs Jacobi iteration, while the subsystem addressing fine points in the right-lower block uses ILU iteration.

\begin{table}[htbp]
  \centering
  \setlength{\tabcolsep}{4.5mm}
  \caption{Results of two-level preconditioning with different interpolations}
    \begin{tabular}{cccccccc}
    \toprule
    \multicolumn{1}{c}{\multirow{2}[2]{*}{Model}} & \multicolumn{1}{c}{\multirow{2}[2]{*}{$\mathcal{B}$}} & \multicolumn{3}{c}{$\hat{\mathcal{P}}$} & \multicolumn{3}{c}{$\tilde{\mathcal{P}}$} \\
       &   & \NIT  & \TIME  & $r_{\text{rel}}^k$ & \NIT  & \TIME  & $r_{\text{rel}}^k$ \\
    \midrule
    \multirow{3}[1]{*}{model 1} &JAC   & 100   & 5.77  & 8.89$\times 10^{-3}$ & 100   & 5.66  & 8.28$\times 10^{-1}$ \\
    &$\mathcal{B}_{F}$   & 24    & 3.13  & 5.37$\times 10^{-9}$ & 24    & 3.11  & 6.06$\times 10^{-9}$ \\
    &$\mathcal{B}_{s}$ & 25    & 3.36  & 8.90$\times 10^{-9}$ & 25    & 3.27  & 8.77$\times 10^{-9}$ \\
    \midrule
    \multirow{3}[1]{*}{model 2} &JAC   & 100   & 5.80  & 7.04$\times 10^{-3}$ & 100   & 5.50  & 2.41$\times 10^{-1}$ \\
    &$\mathcal{B}_{F}$   & 24    & 3.25  & 8.94$\times 10^{-9}$ & 25    & 3.13  & 4.23$\times 10^{-9}$ \\
    &$\mathcal{B}_{s}$ & 26    & 3.37  & 6.94$\times 10^{-9}$ & 26    & 3.28  & 6.87$\times 10^{-9}$ \\
    \midrule
    \multirow{3}[1]{*}{model 3} &JAC   & 100   & 5.39  & 3.17$\times 10^{-2}$ & 100   & 5.21  & 8.43$\times 10^{-1}$ \\
    &$\mathcal{B}_{F}$   & 30    & 3.33  & 3.78$\times 10^{-9}$ & 30    & 3.16  & 4.23$\times 10^{-9}$ \\
    &$\mathcal{B}_{s}$ & 29    & 3.51  & 4.66$\times 10^{-9}$ & 29    & 3.41  & 4.47$\times 10^{-9}$ \\
    \bottomrule
    \end{tabular}%
  \label{tab:result}%
\end{table}%

From Table~\ref{tab:result}, it is evident that when using JAC smoother, neither type of interpolation operator achieves convergence for the two-level methods. However, employing $\mathcal{B}_{F}$ and $\mathcal{B}_{s}$ smoothers leads to convergence within 30 iterations. Therefore, both types of smoothers designed for interpolation operators in this study are effective. When paired with $\mathcal{B}_{F}$ and $\mathcal{B}_{s}$ smoothers, the iteration counts remain consistent between the simplified and ideal interpolation operators. Due to its simpler form, the simplified interpolation operator  $\tilde{\mathcal{P}}$ requires less computation time compared to the ideal interpolation operator  $\hat{\mathcal{P}}$.

This paper proposes an approximate method.  
The effectiveness of approximate ideal interpolation $\hat{\mathcal{P}}^{\text{d}}$ and approximate simplified interpolation $\tilde{\mathcal{P}}^{\text{d}}$ operators, with approximate simplified restriction operators $\tilde{\mathcal{R}}^{\text{d}} = ( \tilde{\mathcal{P}}^{\text{d}} )^{\top}$, is then discussed.

For clarity, the notation $\mathcal{P}(\mathcal{B})$ denotes a two-level method that combines a pre-smoother $\mathcal{B}$ with an interpolation operator $\mathcal{P}$.
Taking model 2 as an example, we illustrate the effects of interpolation before and after approximation using the $\tilde{P}(\mathcal{B}_{s})$ method. 
Table~\ref{tab:model2-diff-drop} presents the average number of nonzeros per row $\text{nnz}(\text{row})$ for the submatrix $\bmat{P}$, interpolation matrix $\tilde{\mathcal{P}}$, and coarse grid matrix $\mathcal{A}_{h}$, alongside setup time \Tsetup, solve time \Tsolve, and total time \Ttotal ~for the two-level method.

\begin{table}[H]
  \centering
  \setlength{\tabcolsep}{4mm}
  \caption{Comparison of different approximation stategies for model 2}
    \begin{tabular}{ccccccccc}
    \toprule
    \multirow{2}[1]{*}{$\varepsilon$} & \multicolumn{3}{c}{$\text{nnz} (\text{row})$} & \multirow{2}[1]{*}{\NIT} & \multirow{2}[1]{*}{$r_{\text{rel}}^k$} & \multicolumn{3}{c}{\TIME} \\
          &  $\bmat{P}$ & $\tilde{\mathcal{P}}$ & $\mathcal{A}_{H} $    &       &       & \Tsetup & \Tsolve & \Ttotal \\
    \midrule
    no drop    & 200 & 1.6 & 17 & 26    & 6.87$\times 10^{-9}$ & 0.91  & 2.36  & 3.27  \\
    $10^{-15}$ & 3   & 1.0 & 14    & 26    & 4.42$\times 10^{-9}$ & 0.38  & 2.06  & 2.44  \\
    $10^{-10}$ & 2  & 0.9 & 13   & 26    & 5.11$\times 10^{-9}$ & 0.37  & 1.48  & 1.85  \\
    \bottomrule
    \end{tabular}%
  \label{tab:model2-diff-drop}%
\end{table}%

Table~\ref{tab:model2-diff-drop} presents each row corresponding to different interpolation operators: the original simplified interpolation $\tilde{\mathcal{P}}$, approximate simplified interpolation $\tilde{\mathcal{P}}^{\text{d}}_{1}$ with a drop criterion of $|\bmat{P}_{ij}| < 10^{-15}$, and $\tilde{\mathcal{P}}^{\text{d}}_{2}$ with a drop criterion of $|\bmat{P}_{ij}| < 10^{-10}$.
After approximation, it is noted that the number of iterations remains unchanged, but the time significantly decreases. $\tilde{\mathcal{P}}^{\text{d}}_{2}$ achieves a minimum time of 1.85s, which represents a 43.42\% reduction compared to the original simplified interpolation. 
This reduction is attributed to the fact that the original submatrix $\bmat{P}$ initially had an average of 200 nonzeros per row, which reduced to single digits after dropping. And the corresponding interpolation and coarse grid matrices become sparser.

Table~\ref{tab:result-approx} presents the results of three models using approximate interpolation. Models 1 and 2 have problem sizes of 639,382, while model 3 has a problem size of 712,322. Results for $\hat{\mathcal{P}}^{\text{d}} (\mathcal{B}_{F})$, $\hat{\mathcal{P}}^{\text{d}} (\mathcal{B}_{s})$, $\tilde{\mathcal{P}}^{\text{d}} (\mathcal{B}_{F})$, and $\tilde{\mathcal{P}}^{\text{d}} (\mathcal{B}_{s})$ are summarized. 
Compared to Table~\ref{tab:result}, the two-level methods using approximate interpolation show significant reduction in time. Across all three models, rapid convergence is achieved with iterations not exceeding 30 and time less than 2.08s.

\begin{table}[H]
  \centering
  \setlength{\tabcolsep}{4.5mm}
  \caption{Results of two-level preconditioning with different approximate interpolations (approximate simplified restriction)}
    \begin{tabular}{cccccccc}
    \toprule
    \multicolumn{1}{c}{\multirow{2}[2]{*}{Model}} & \multicolumn{1}{c}{\multirow{2}[2]{*}{$\mathcal{B}$}} & \multicolumn{3}{c}{$\hat{\mathcal{P}}^{\text{d}}$} & \multicolumn{3}{c}{$\tilde{\mathcal{P}}^{\text{d}}$} \\
       &   & \NIT  & \TIME  & $r_{\text{rel}}^k$ & \NIT  & \TIME  & $r_{\text{rel}}^k$ \\
    \midrule
    \multirow{2}[1]{*}{model 1}
    &$\mathcal{B}_{F}$   & 24    & 1.65  & 9.35$\times 10^{-9}$ & 25    & 1.57  & 4.11$\times 10^{-9}$ \\
    &$\mathcal{B}_{s}$ & 26    & 1.91  & 7.36$\times 10^{-9}$ & 26    & 1.79  & 7.23$\times 10^{-9}$ \\
    \midrule
    \multirow{2}[1]{*}{model 2} 
    &$\mathcal{B}_{F}$   & 24    & 1.65  & 6.80$\times 10^{-9}$ & 24    & 1.63  & 8.31$\times 10^{-9}$ \\
    &$\mathcal{B}_{s}$ & 26    & 1.92  & 5.19$\times 10^{-9}$ & 26    & 1.85  & 5.11$\times 10^{-9}$ \\
    \midrule
    \multirow{2}[1]{*}{model 3}
    &$\mathcal{B}_{F}$   & 30    & 1.79  & 4.68$\times 10^{-9}$ & 30    & 1.79  & 5.18$\times 10^{-9}$ \\
    &$\mathcal{B}_{s}$ & 29    & 2.08  & 5.53$\times 10^{-9}$ & 29    & 2.00  & 5.31$\times 10^{-9}$ \\
    \bottomrule
    \end{tabular}%
  \label{tab:result-approx}
\end{table}%

To visually show the performance improvement of two-level algorithms after approximating interpolation operators, Figure~\ref{fig:time} compares solving times before and after approximation for models 1 and 2.
The left vertical axis of the graph displays  the solving time of the two-level methods. The red line indicates the time for original interpolation, denoted as $t(\mathcal{P})$, while the blue line indicates the time for the approximate interpolation, denoted as $ t(\mathcal{P}^{\text{d}})$. 
The right vertical axis shows the percentage performance improvement, characterized by the reduction in time, defined as
$$
\text{perf} = \frac{t(\mathcal{P}) - t(\mathcal{P}^{\text{d}})}{t(\mathcal{P})} \times 100\%.
$$
From Figure~\ref{fig:time}, it is clear  that for each method, using approximate interpolation operators leads to significant reductions in solving time, resulting in  performance improvements exceeding 40\%. Thus, approximate interpolation proves to be a highly effective method for enhancing computational efficiency.

\begin{figure}[H]
	\centering
	\subfigure[Model 1]{\includegraphics[scale=0.37]{./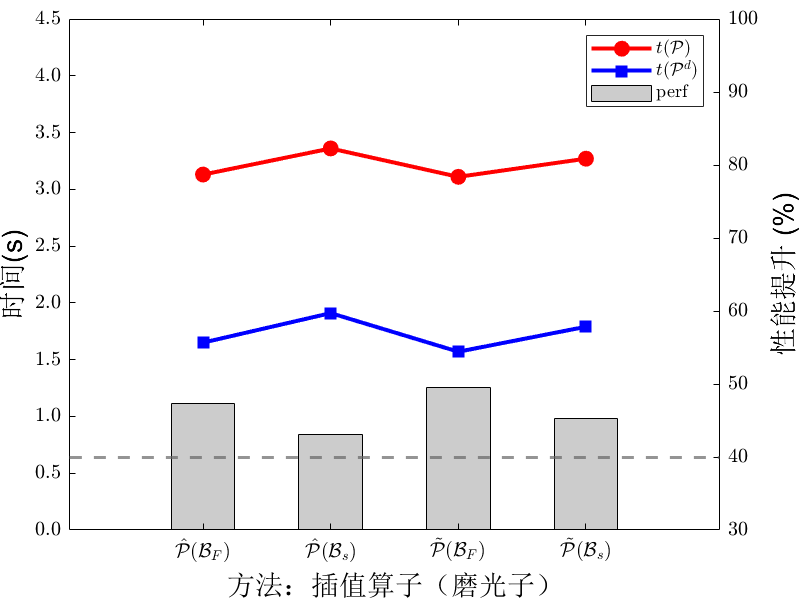} \label{fig:time-model1}} 
    \subfigure[Model 2]{\includegraphics[scale=0.37]{./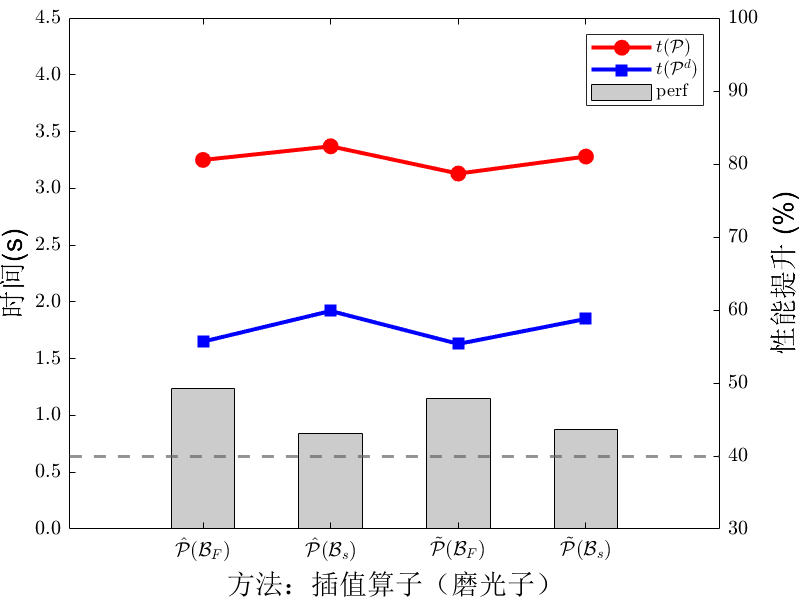} \label{fig:time-model2}} 
	\caption{Comparison of results before and after approximation.}
	\label{fig:time}
\end{figure}

\subsection{Comparison of two-level preconditioning}
\label{subsect:xxx}
To compare the two-level preconditioning methods with other preconditioning methods, the specific algorithmic components chosen for the two-level methods are $\tilde{\mathcal{P}}^{\text{d}} (\mathcal{B}_{F})$ and $\tilde{\mathcal{P}}^{\text{d}} (\mathcal{B}_{s})$, with the restriction operators being approximate simplified restriction operators $\tilde{\mathcal{R}}^{d}$. The saddle point systems are solved using GCR iteration method, and the tested preconditioning methods include:
\begin{itemize}
    \item SIMPLE: Semi-Implicit Method for Pressure Linked Equations, 
    \item AMG: Classic Algebraic Multigrid Method. 
\end{itemize}

The SIMPLE method, which was originally designed for solving block system 
in computational fluid dynamics~\cite{patankar1983calculation}, 
has also been successfully applied in contact mechanics~\cite{AMGwiesner2021algebraic}.
Thus, it is tested as one preconditioning method
for solving the saddle system.
In the implementation of the SIMPLE method,
it is needed to provide solution methods 
for solving the subsystem corresponding to 
the left-upper and right-lower blocks of the saddle point matrix, which represent the subsystem of displacement 
and Lagrange multiplier in the context of contact computation.
To solve the displacement subsystem, the AMG method is used, and the coarsening strategy is PMIS.
The ILU method is used to solve the Lagrange multiplier subsystem~\cite{AMGwiesner2021algebraic}.

When AMG method is used as a preconditioner for solving 
the saddle linear system, 
we use HMIS coarsening method~\cite{de2006reducing}, 
the extended+$i$ interpolation method~\cite{de2008distance}
and the node coarsening strategy~\cite{AMGbaker2016scalability}.

The solution results for the three models are presented in Table~\ref{tab:compare}, with all tests conducted using 28 processes. Models 1 and 2 have problem sizes of 639,382, while model 3 has a problem size of 712,322. In numerical experiments, the maximum number of iterations is 2000.

\begin{table}[H]
  \centering
  \setlength{\tabcolsep}{6mm}
  \caption{Comparison for different preconditioners}
    \begin{tabular}{clccc}
    \toprule
    Model & Preconditioner   & \NIT  & \TIME  & $r_{\text{rel}}^k$ \\
    \midrule
    \multirow{4}[1]{*}{model 1} & SIMPLE & 2000  & 53.39  & 9.99$\times 10^{-1}$ \\
          & AMG   & 2000  & 25.48  & 8.54$\times 10^{-6}$ \\
          & TLAMG:$\tilde{\mathcal{P}}^{\text{d}} (\mathcal{B}_{F})$ & 25    & 1.57  & 4.11$\times 10^{-9}$ \\
          & TLAMG:$\tilde{\mathcal{P}}^{\text{d}} (\mathcal{B}_{s})$ & 26    & 1.79  & 7.23$\times 10^{-9}$ \\
    \midrule
    \multirow{4}[0]{*}{model 2} & SIMPLE & 261   & 8.17  & 9.47$\times 10^{-9}$ \\
          & AMG   & 2000  & 25.74  & 6.49$\times 10^{-7}$ \\
          & TLAMG:$\tilde{\mathcal{P}}^{\text{d}} (\mathcal{B}_{F})$ & 24    & 1.63  & 8.31$\times 10^{-9}$ \\
          & TLAMG:$\tilde{\mathcal{P}}^{\text{d}} (\mathcal{B}_{s})$ & 26    & 1.85  & 5.11$\times 10^{-9}$ \\
    \midrule
    \multirow{4}[1]{*}{model 3} & SIMPLE & 2000  & 60.90  & 9.99$\times 10^{-1}$ \\
          & AMG   & 2000  & 28.64  & 6.51$\times 10^{-6}$ \\
          & TLAMG:$\tilde{\mathcal{P}}^{\text{d}} (\mathcal{B}_{F})$ & 30    & 1.79  & 5.18$\times 10^{-9}$ \\
          & TLAMG:$\tilde{\mathcal{P}}^{\text{d}} (\mathcal{B}_{s})$ & 29    & 2.00  & 5.31$\times 10^{-9}$ \\
    \bottomrule
    \end{tabular}%
  \label{tab:compare}%
\end{table}%

Because it applies only when the left-upper block is nonsingular, the SIMPLE preconditioner is exclusively suitable for model 2, achieving convergence in 261 iterations within 8.17 seconds.
The classical AMG method failed to converge within 2000 iterations. 
In contrast, both two-level preconditioning methods achieved convergence within 30 iterations and solved each model in less than 2.00 seconds. 
Therefore, the two-level methods prove to be effective preconditioners for solving saddle point systems in contact mechanics.
They are applicable across various contact models, offering advantages such as reduced iteration counts and robust efficiency.

\subsection{Algorithm scalability and parallel scalability}
\label{subsect:xxx}
In the final part of the numerical experiments, we explore the algorithmic scalability and parallel scalability of the two-level preconditioning methods. 
To analyze algorithmic scalability, we tested three different problem sizes for each contact model using 56 processes. It is observed from Table~\ref{tab:diff-dofs} that as the problem size increases, the iteration of both types of two-level methods remain  stable, while solution times increase proportionally with the increase in degrees of freedom.
For example, for model 1, the problem size increased from 10.07 million to 22.48 million, which is a 2.2-fold increase. The solve time for $\tilde{\mathcal{P}}^{\text{d}} (\mathcal{B}_{s})$ in the two-level method increased from 16.67 seconds to 32.17 seconds, approximately doubling.
Therefore, the two-level preconditioning method exhibits good algorithmic scalability.

Furthermore, the results indicate that in most cases, solve times with the sSIMPLE smoother $\mathcal{B}_{s}$ are shorter than those with the  $\mathcal{B}_{F}$ smoother.

\begin{table}[H]
  \centering
  \setlength{\tabcolsep}{5mm}
  \renewcommand{\arraystretch}{0.96} 
  \caption{Iteration numbers and solution time of two-level preconditioned GCR method for solving contact systems with different DOFs (56 Cores)}
    \begin{tabular}{ccrccc}
    \toprule
    Model & Method & \multicolumn{1}{c}{DOFs} & \NIT   & \TIME     & $r_{\text{rel}}^k$ \\
    \midrule
    \multirow{6}[1]{*}{model 1} & \multirow{3}[1]{*}{$\tilde{\mathcal{P}}^{\text{d}} (\mathcal{B}_{s})$} & 5 661 334 & 30    & 9.83  & 7.19$\times 10^{-9}$ \\
          &       & 10 078 520 & 31    & 16.67  & 6.43$\times 10^{-9}$ \\
          &       & 22 483 048 & 35    & 32.15  & 5.66$\times 10^{-9}$ \\
          \cmidrule{2-6}
          & \multirow{3}[1]{*}{$\tilde{\mathcal{P}}^{\text{d}} (\mathcal{B}_{F})$} & 5 661 334 & 28 & 9.50   &  6.92$\times 10^{-9}$ \\
          &       & 10 078 520 & 32    & 16.79  & 4.78$\times 10^{-9}$ \\
          &       & 22 483 048 & 32    & 33.86  & 4.99$\times 10^{-9}$ \\
    \midrule
    \multirow{6}[1]{*}{model 2} & \multirow{3}[1]{*}{$\tilde{\mathcal{P}}^{\text{d}} (\mathcal{B}_{s})$} & 5 661 334 & 29    & 8.50  & 4.96$\times 10^{-9}$ \\
          &       & 10 078 520 & 30    & 15.86  & 5.65$\times 10^{-9}$ \\
          &       & 22 483 048 & 33    & 33.02  & 9.58$\times 10^{-9}$ \\
          \cmidrule{2-6}
          & \multirow{3}[1]{*}{$\tilde{\mathcal{P}}^{\text{d}} (\mathcal{B}_{F})$} & 5 661 334 & 27    & 8.77  & 5.66$\times 10^{-9}$ \\
          &       & 10 078 520 & 31    & 16.25  & 6.16$\times 10^{-9}$ \\
          &       & 22 483 048 & 30    & 33.78  & 9.88$\times 10^{-9}$ \\
    \midrule
    \multirow{6}[1]{*}{model 3} & \multirow{3}[1]{*}{$\tilde{\mathcal{P}}^{\text{d}} (\mathcal{B}_{s})$} & 5 351 164 & 33    & 8.51  & 8.98$\times 10^{-9}$ \\
          &       & 11 369 428 & 35    & 20.62  & 7.80$\times 10^{-9}$ \\
          &       & 22 465 866 & 36    & 39.59  & 5.49$\times 10^{-9}$ \\
          \cmidrule{2-6}
          & \multirow{3}[1]{*}{$\tilde{\mathcal{P}}^{\text{d}} (\mathcal{B}_{F})$} & 5 351 164 & 33    & 9.76  & 8.23$\times 10^{-9}$ \\
          &       & 11 369 428 & 43    & 20.29  & 7.11$\times 10^{-9}$ \\
          &       & 22 465 866 & 41    & 40.48  & 9.09$\times 10^{-9}$ \\
    \bottomrule
    \end{tabular}%
  \label{tab:diff-dofs}%
\end{table}%

Taking model 2 as an example, the results for solving with 22.48 million DOFs using different numbers of processor cores are presented. Table~\ref{tab:diff-np} shows the iteration, relative residuals, solution times, speedup ratio (denoted as $S_p$), and parallel efficiency (denoted as $\eta$ (\%)) for both methods. 
Table~\ref{tab:diff-np} indicates that for the $\tilde{\mathcal{P}}^{\text{d}} (\mathcal{B}_{s})$ method, the total solution time decreases from 100.54s to 33.02s as the number of processor cores increases from 14 to 56. The speedup ratio increases from 1 to 3.04, while the parallel efficiency decreases from 100\% to 76.11\%.
However, with a further increase to 112 cores, the total solution time increases, and the parallel efficiency drops to 24.19\%. This suggests that the method scales effectively up to 56 cores; beyond this, the communication overhead increases significantly when using 112 cores to solve the problem.
Similar conclusions apply to the $\tilde{\mathcal{P}}^{\text{d}} (\mathcal{B}_{F})$ method.

\begin{table}[H]
  \centering
  \setlength{\tabcolsep}{5mm}
  \caption{Parallel performance of the two-level preconditioned GCR method for solving model 2  (22 483 048 DOFs)}
    \begin{tabular}{ccccccc}
    \toprule
    Method & \NP & \NIT   & \TIME     & $r_{\text{rel}}^k$   & $S_p$   & $\eta$(\%) \\
    \midrule
    \multirow{4}[1]{*}{$\tilde{\mathcal{P}}^{\text{d}} (\mathcal{B}_{s})$} & 14    & 31    & 100.54  & 9.59$\times 10^{-9}$ & 1.00  & 100 \\
          & 28    & 32    & 60.01  & 8.89$\times 10^{-9}$ & 1.68  & 83.77  \\
          & 56    & 33    & 33.02  & 9.58$\times 10^{-9}$ & 3.04  & 76.11  \\
          & 112   & 33    & 51.94  & 7.60$\times 10^{-9}$ & 1.94  & 24.19  \\
    \midrule
    \multirow{4}[0]{*}{$\tilde{\mathcal{P}}^{\text{d}} (\mathcal{B}_{F})$} & 14    & 29    & 105.40  & 8.37$\times 10^{-9}$ & 1.00  & 100 \\
          & 28    & 30    & 60.28  & 5.51$\times 10^{-9}$ & 1.75  & 87.43  \\
          & 56    & 30    & 33.78  & 9.88$\times 10^{-9}$ & 3.12  & 78.02  \\
          & 112   & 30    & 52.98  & 6.76$\times 10^{-9}$ & 1.99  & 24.87  \\
    \bottomrule
    \end{tabular}%
  \label{tab:diff-np}%
\end{table}%

\section{Conclusion and Remarks}
\label{sect:remark}

In contact computations, solving linear equations is of paramount importance. When 
employing the Lagrange multiplier method to impose contact constraints, the resulting 
saddle point system is difficult to solve because of the indefiniteness of its 
coefficient matrix. Based on the characteristics of contact computations, this study 
presents a two-level preconditioning method.

This method utilizes physical quantities for coarsening, selecting specific displacement 
degrees of freedom as coarse points. The study introduces two types of interpolation 
operators and corresponding smoothing algorithms. Additionally, the coarse grid matrix 
exhibits positive definiteness, facilitating the solution using the algebraic multigrid 
(AMG) method.

During the method's implementation, interpolation computations are optimized, taking 
into account the matrix structure of contact problems, thus ensuring ease of use. 
Moreover, by eliminating small element values from both the interpolation and coarse 
grid operators, the operator complexity of the two-level method is effectively reduced, 
improving computational efficiency. Numerical results for three contact models show that
the GCR method with two-level preconditioning significantly reduces solution time 
compared to both the AMG and SIMPLE methods. This demonstrates that two-level 
preconditioning represents an effective approach for contact saddle point problems.



\bibliographystyle{elsarticle-num} 
\bibliography{twolevel}





\end{document}